\newcommand*{\R}{\mathbb{R}}
\newcommand{\probP}{\text{I\kern-0.15em P}}
\title{Dual systolic graphs}
\author{Daniel Carmon \and Amir Yehudayoff}
\date{}
\begin{document}
\theoremstyle{plain}
\newtheorem{theorem}{Theorem}
\newtheorem{lemma}[theorem]{Lemma}
\newtheorem{claim}[theorem]{Claim}

\newtheorem*{definition}{Definition}

\maketitle

\begin{abstract}
We define a family of graphs we call {\em dual systolic} graphs.
This definition comes from graphs that are duals of systolic
simplicial complexes.
Our main result is a sharp (up to constants) isoperimetric inequality for dual systolic graphs.
The first step in the proof is an extension of the classical 
isoperimetric inequality of the boolean cube.
The isoperimetric inequality for dual systolic graphs, however,
is exponentially stronger than the one for the boolean cube. 

Interestingly, we know that dual systolic graphs exist,
but we do not yet know how to efficiently construct them.
We, therefore, define a weaker notion of dual systolicity.
We prove the same isoperimetric inequality for weakly dual systolic graphs,
and at the same time provide an efficient construction of a family of graphs that are weakly dual systolic.
We call this family of graphs {\em clique products}.
We show that there is a non-trivial connection between
the small set expansion capabilities and the threshold rank of clique products,
and believe they can find further applications.

%
\end{abstract}

\section{Introduction}

The systole of a space is the smallest length of a cycle that can not be
contracted.
Systoles were originally studied for manifolds,
but our focus is combinatorial.
For graphs, the systole is the girth.
A graph is systolic if it locally looks like a tree.
In a broader topological setting,
a simplicial complex\footnote{A collection of finite sets
that is closed downwards.} is systolic 
if every short cycle in its one-skeleton is contractible
(roughly speaking, there are no short induced cycles).
The systolic condition for simplicial complexes can be thought of as 
a combinatorial definition of non-positive curvature.
Systolic inequalities play an important role in many areas of mathematics
(see~\cite{katz2007systolic,gromov1992systoles} and references within).

%


A particular systolic condition for simplicial complexes, 
identified by Davis and Moussong, is the {\em no empty square} condition
(see~\cite{moussong1988hyperbolic,davis2002nonpositive,Janus:2003} and references within).
This condition implies that the one-skeleton does not contain an induced cycle of length 
at most four. 
It was open whether simplicial complexes satisfying this condition
exist or not.
Januszkiewicz and Świątkowski were the first to construct
such complexes~\cite{Janus:2003,Janus:2006}.
Their construction is deep and sophisticated.
Its importance is also evident from
its application in machine learning~\cite{brukhim:2022}.
This application highlights the fundamental importance of systoles.
Roughly speaking, some systolic spaces lead
to learning problems that are difficult to solve
due to a ``global'' reason but not a ``local'' reason.

The starting point of our work was to improve our understanding of
the JS constructions.
A specific question that came up is
{\em what is special about duals of systolic spaces?}
Every pure\footnote{All maximal simplexes
have the same dimension.} simplicial complex defines a dual graph. 
The vertices of the graph are the facets\footnote{Full-dimensional
simplexes.} in the complex.
Two vertices are connected by an edge
if the two facets share a face of co-dimension one.
The fact that the graph is a dual of a complex 
automatically leads to non-trivial behavior.
We formalize such behavior using the notion of pseudo-cubes.

\begin{definition}
We say that a graph $G = (V,E)$ is a {\em pseudo-cube} if the following conditions are
satisfied:
\begin{enumerate}[(i)]
\item It is $d$-regular.
\item It has an edge $d$-coloring; that is, there is $\chi :E \to [d]$
so that for all $v \in V$,
if $E_v$ is the set of $d$ edges touching the vertex $v$,
then $\chi(E_v) = [d]$.
\item For every color $i \in [d]$,
the graph obtained from $G$ by contracting all edges of color~$\neq i$
does not have self-loop.
\end{enumerate}
\end{definition}

The number~$d$ is called the {\em dimension} of the pseudo-cube.
Property (iii) is called {\em color independence}.
In other words, it means that for every color $i$
and every cycle in $G$,
the number of edges of color $i$ along the cycle 
is not one (either zero or at least two).

While this definition is simple to state, 
let us describe some examples, and provide some intuition.
A well-known example of a pseudo-cube is the $d$-dimensional boolean cube~$Q_d$.
Its vertex set is $\{0,1\}^d$
and two vertices are connected by an edge if they differ in a single entry.
The edge coloring comes from the $d$ dimensions or directions in the cube.
The color independence follows from the linear independence
of the standard basis.

The definition of a pseudo-cube graph corresponds to the graph being dual to a 
{\em chromatic} and {\em non-branching} pure complex.
A simplicial complex is {\em chromatic} if there is a coloring of its vertices 
such that each facet has a vertex of each color. 
A pure simplicial is {\em non-branching} if every 
simplex of co-dimension one is a face of exactly two facets.
Such simplicial complexes allow us to encode data in a useful manner.
The general theme is to model ``legal or possible states'' as topological spaces.

Consider, for example, the following scenario.
There are three players $\textcolor{blue!95}{A},\textcolor{violet!95}{B},\textcolor{teal!95}{C}$, and a deck of four cards $1,2,3,4$. 
Each player gets a unique card.
There are $24$ possible assignments of cards to players.
This scenario can be modeled as a chromatic and non-branching simplicial complex
(see Figure~\ref{fig:cards} and Example~3 in~\cite{Goubault_2018}).
A vertex in the complex corresponds to the local view of a single player
(i.e., the vertices are pairs of the form $(\textcolor{blue!95}{A},1)$). 
The color of a vertex is the player's name (the color of $(\textcolor{blue!95}{A},1)$ is $\textcolor{blue!95}{A}$).
The $24$ facets correspond to possible states of the world such as $\{(\textcolor{blue!95}{A},2),(\textcolor{violet!95}{B},1),(\textcolor{teal!95}{C},4)\}$.
Each facet has three colors. 
When a single player switches between her card and the free card,
we move to an adjacent facet.
This simplicial complex turns out to be a triangulation of the two-dimensional torus.

\begin{figure}[h!]
\centering

\begin{tikzpicture} 
\tikzset{bluecirc/.style={circle, draw=black, fill=blue!50, inner sep=1pt,minimum size=8pt}}
\tikzset{violsqur/.style={circle,draw=black, fill=violet!50, inner sep=1pt,minimum size=8pt}}
\tikzset{tealtri/.style={circle, draw=black, fill=teal!50, inner sep=1pt,minimum size=8pt}}


\draw [fill=gray!50] (2,0) -- (8,0) -- (9,1.73) -- (3,1.73) -- cycle;

\foreach \x in {2.5,...,9}{
    \foreach \y in {1}{
      \pgfmathsetmacro\xx{\x - 0.5*Mod(\y+1,2)};
      \pgfmathsetmacro\yy{\y*sqrt(3)/2};
      \pgfmathsetmacro\yyy{(\y+1)*sqrt(3)/2};
      
     \draw (\xx,\yy) -- ++ (60:1); 
     
    }
}

\foreach \x in {2.5,...,7.5}{
    \foreach \y in {1}{
      \pgfmathsetmacro\xx{\x - 0.5*Mod(\y+1,2)};
      \pgfmathsetmacro\yy{\y*sqrt(3)/2};
      \pgfmathsetmacro\yyy{(\y+1)*sqrt(3)/2};
      
     \draw (\xx,\yy) -- ++ (-60:1); 
     
    }
}

\foreach \x in {2.5,...,8}{
    \foreach \y in {1}{
      \pgfmathsetmacro\xx{\x - 0.5*Mod(\y+1,2)};
      \pgfmathsetmacro\yy{\y*sqrt(3)/2};
      \pgfmathsetmacro\yyy{(\y+1)*sqrt(3)/2};
      
     \draw (\xx,\yy) -- ++ (0:1); 
     
    }
}

\foreach \x in {2.5,...,8}{
    \foreach \y in {0}{
      \pgfmathsetmacro\xx{\x - 0.5*Mod(\y+1,2)};
      \pgfmathsetmacro\yy{\y*sqrt(3)/2};
      \pgfmathsetmacro\yyy{(\y+1)*sqrt(3)/2};
      
     \draw (\xx,\yy) -- ++ (0:1); 
     
    }
}

\foreach \x in {3.5,...,9}{
    \foreach \y in {2}{
      \pgfmathsetmacro\xx{\x - 0.5*Mod(\y+1,2)};
      \pgfmathsetmacro\yy{\y*sqrt(3)/2};
      \pgfmathsetmacro\yyy{(\y+1)*sqrt(3)/2};
      
     \draw (\xx,\yy) -- ++ (0:1); 
     
    }
}

\foreach \x in {3.5,...,9}{
    \foreach \y in {2}{
      \pgfmathsetmacro\xx{\x - 0.5*Mod(\y+1,2)};
      \pgfmathsetmacro\yy{\y*sqrt(3)/2};
      \pgfmathsetmacro\yyy{(\y+1)*sqrt(3)/2};
      
     \draw (\xx,\yy) -- ++ (-60:1); 
     
    }
}

\foreach \x in {2.5,...,9}{
    \foreach \y in {0}{
      \pgfmathsetmacro\xx{\x - 0.5*Mod(\y+1,2)};
      \pgfmathsetmacro\yy{\y*sqrt(3)/2};
      \pgfmathsetmacro\yyy{(\y+1)*sqrt(3)/2};
        \pgfmathsetmacro\xxx{int(1+3*Mod(\x-0.5,2))};

     \draw (\xx,\yy) -- ++ (60:1); 
    
                \pgfmathparse{int(mod(\x,3))} 
             \ifnum0=\pgfmathresult\relax
                     \node[violsqur]  at (\xx,\yy) {$\xxx$};                           
                \else  
                 \ifnum1=\pgfmathresult\relax
                     \node[tealtri]   at (\xx,\yy) {$\xxx$};
                \else  
                    \node[bluecirc]  at (\xx,\yy) {$\xxx$};
                \fi
               \fi
    }
}

\foreach \x in {2.5,...,9}{
    \foreach \y in {0}{
      \pgfmathsetmacro\xx{\x - 0.5*Mod(\y+1,2)};
      \pgfmathsetmacro\yy{\y*sqrt(3)/2};
      \pgfmathsetmacro\yyy{(\y+1)*sqrt(3)/2};
        \pgfmathsetmacro\xxx{int(2+Mod(\x+0.5,2))};

                \pgfmathparse{int(mod(\x,3))} 
             \ifnum0=\pgfmathresult\relax
                    \node[bluecirc]  at (\x,\yyy) {$\xxx$};
                \else  
                 \ifnum1=\pgfmathresult\relax
                     \node[violsqur]  at (\x,\yyy) {$\xxx$};
                \else  
                    \node[tealtri]  at (\x,\yyy) {$\xxx$};
                \fi
               \fi
    }
}

\foreach \x in {3.5,...,10}{
    \foreach \y in {2}{
      \pgfmathsetmacro\xx{\x - 0.5*Mod(\y+1,2)};
      \pgfmathsetmacro\yy{\y*sqrt(3)/2};
      \pgfmathsetmacro\yyy{(\y+1)*sqrt(3)/2};
              \pgfmathsetmacro\xxx{int(1+3*Mod(\x+0.5,2))};

    
                \pgfmathparse{int(mod(\x,3))} 
             \ifnum0=\pgfmathresult\relax
                     \node[violsqur]  at (\xx,\yy) {$\xxx$};                           
                \else  
                 \ifnum1=\pgfmathresult\relax
                     \node[tealtri]   at (\xx,\yy) {$\xxx$};
                \else  
                    \node[bluecirc]  at (\xx,\yy) {$\xxx$};
                \fi
               \fi
    }
}

\end{tikzpicture}
\caption{Four cards to three players as a simplicial complex.
The 24 triangles are facets,
and vertices of the same color and number
are identified.}\label{fig:cards}
\end{figure}
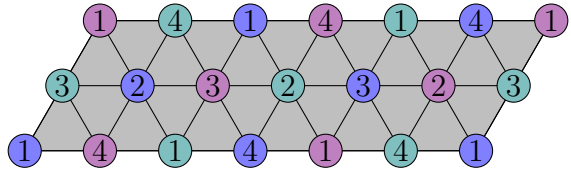

The translation to the topological language leads to
important results in computer science. 
A prominent example is the proof of the asynchronous computability theorem in ~\cite{herlihy1999topological}, where the different colors corresponded to different processors in a distributed setting.
In the recent work~\cite{brukhim:2022}, 
such simplicial complexes were used
to model multiclass learning problems.

The following example provides some additional intuition.
Imagine the unit interval $I=[0,1]$ placed on the line $\R$.
Let $x$ be the reflection around $1 \in \R$
and let $y$ be the reflection around $0 \in \R$.
Both $x$ and $y$ are involutions.
If we apply $x$ on $I$ we get $xI = [1,2]$,
and if we apply $y$ we get $yI = [-1,0]$.
The group $D$ generated by $x$ and $y$ is the infinite dihedral group.
If we apply all of $D$ on $[0,1]$, we get a tiling of the line by unit intervals.
This tiling defines a simplicial complex. 
The dual graph is the Cayley graph of $D$ with $x,y$ as generators,
and is a pseudo-cube.
The intermediate value theorem says that if we move continuously in the line
starting at some unit interval and coming back to the same interval,
then we must cross the same boundary of the interval more than once.
This topological property is abstracted by color independence.

The boolean cube is the dual graph of a non-branching and chromatic 
pure complex.
The vertices of this complex are the $2d$ elements in $[d] \times \{0,1\}$.
Each $x \in \{0,1\}^d$ defines a full dimensional simplex $\sigma_x = \{(i,x_i): i \in [d]\}$.
Two vertices $x,y \in \{0,1\}^d$ are connected by an edge 
iff the common face $\sigma_x \cap \sigma_y$
has co-dimension one.
This simplicial complex is, however, not systolic.
For example, for $d=2$, this complex is an empty square. 

The JS complexes are systolic.
Their dual graphs are therefore {\em dual systolic}.
The following definition provides a combinatorial abstraction.

\begin{definition}{\label{dualsystolic}}
A graph $G$ is {\em dual systolic} of dimension $d$
if it is a pseudo-cube of dimension $d$ so that for every color $i \in [d]$,
the graph obtained from $G$ by contracting all edges of color~$\neq i$
is simple (that is, there are no double edges between vertices).
\end{definition}

Stated differently, dual systolicity means that for every color $i$, 
the following holds.
Let $V_1,V_2 \subseteq V$ be two distinct connected components
with respect to edges of color $\neq i$.
There is at most a single edge of color $i$ between $V_1$ and $V_2$ in $G$.

{\em How does this definition relate to systoles?} 
The observation is that if a simplicial complex
has no empty squares then its dual graph is dual systolic.
Let $\mathcal{C}$ be a chromatic, non-branching and pure simplicial complex,
and let $G$ be its dual graph.
The {\em star} of a vertex $v$ in $\mathcal{C}$ is the set of all simplexes 
$\sigma \in \mathcal{C}$ so that $\{v\} \cup \sigma \in \mathcal{C}$.
The vertex $v$ is called the center of the star.
There is a one-to-one correspondence between 
stars in $\mathcal{C}$ whose center has color $i$ and
connected components in the graph obtained
by deleting edges with color $i$ from $G$.
Let $v_1,v_2$ be two distinct vertices of color $i$ in $\mathcal{C}$.
Assume that the two corresponding connected components have two or more $i$-edges between them.
It follows that the corresponding stars share two different simplexes $\sigma_1,\sigma_2$ of co-dimension one. 
There are two distinct vertices $u_1 \in \sigma_1$
and $u_2 \in \sigma_2$ with the same color.
The square $v_1,u_1,v_2,u_2$ has alternating colors so it is empty
(because the complex is chromatic).

\begin{figure}[t]
  \centering
  \begin{subfigure}[b]{0.3\textwidth}
    \includegraphics[width=\textwidth]{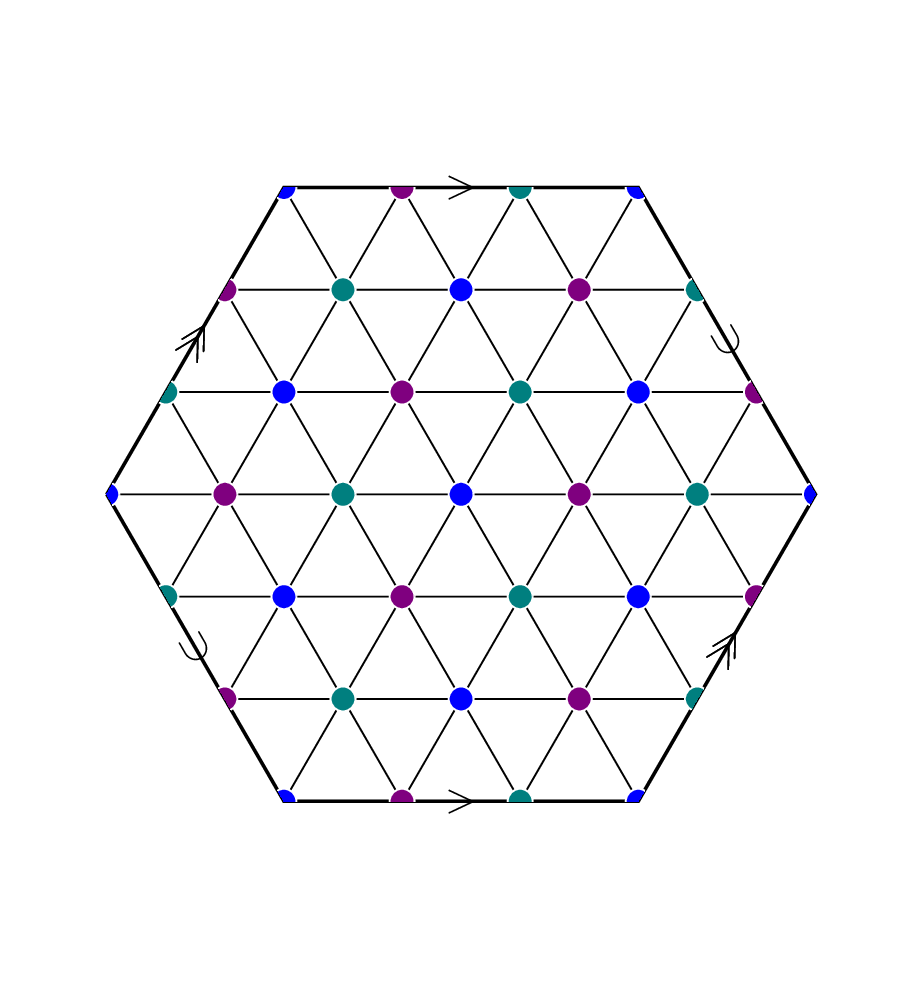}
    \label{fig:image1}
  \end{subfigure}
  \hfill
    \begin{subfigure}[b]{0.3\textwidth}
    \includegraphics[width=\textwidth]{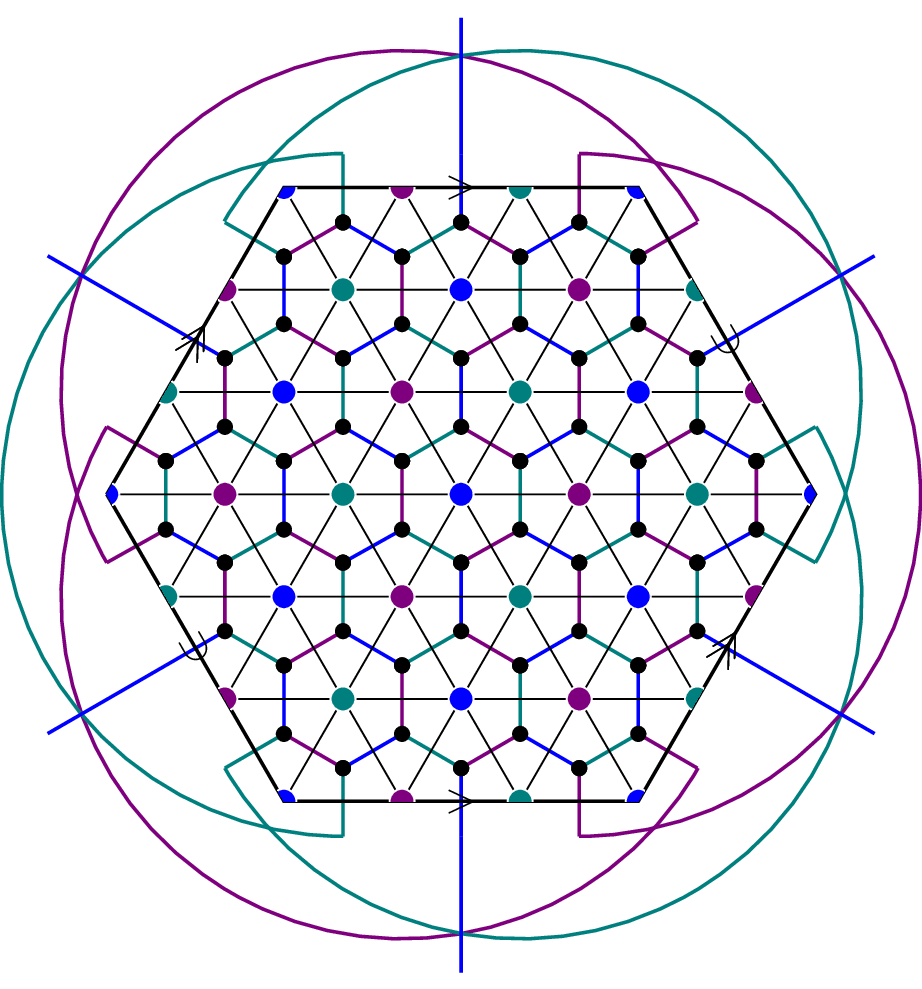}
  \label{fig:image3}
	\end{subfigure}
  \hfill
  \begin{subfigure}[b]{0.3\textwidth}
    \includegraphics[width=\textwidth]{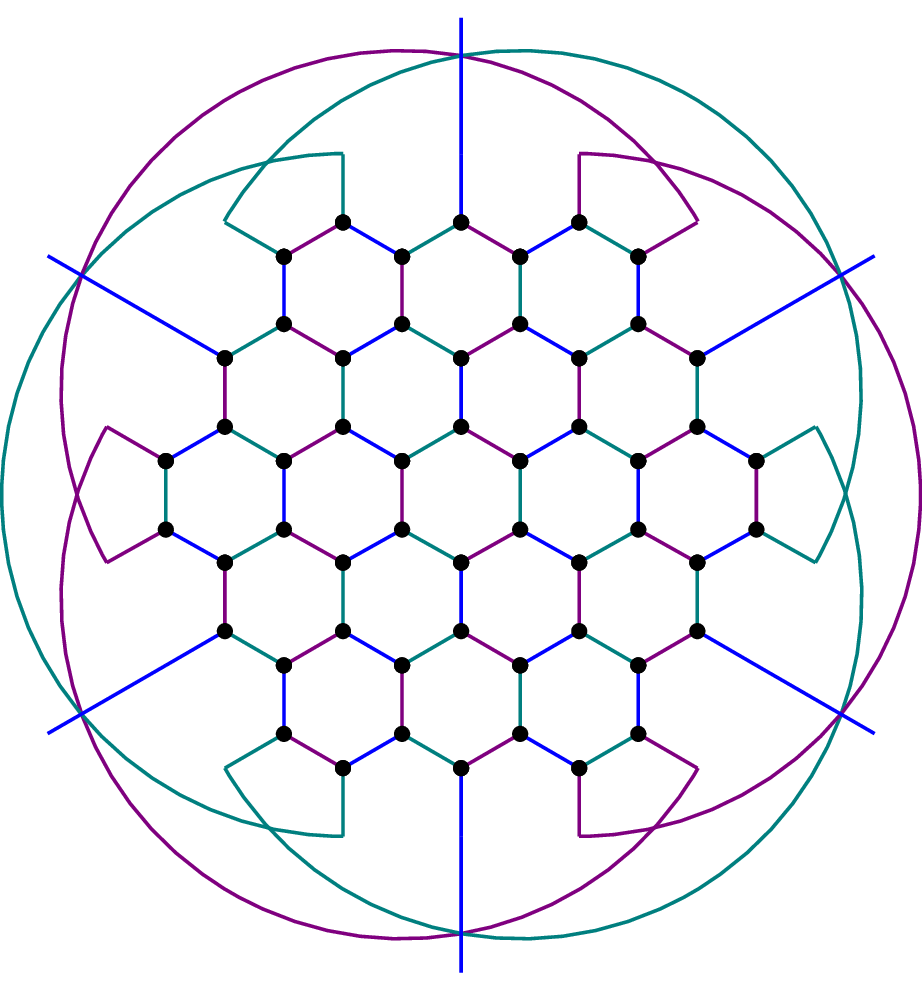}
    \label{fig:image2}
  \end{subfigure}
  \caption{A systolic complex (on the left) and its dual graph (on the right).}

\end{figure}

The $d$-dimensional boolean cube $Q_d$ is not dual systolic for $d>1$;
for every $i \in [d]$, the 
the graph obtained from $Q_d$ by contracting all edges of color~$\neq i$
consists of $2^{d-1}$ parallel edges. 
In a sense, it is the ``least dual systolic'' pseudo-cube.
The dual graphs of the JS complexes are dual systolic.

The definition of dual systolic graphs is simple and natural.
The constructions of Januszkiewicz and Świątkowski imply that
dual systolic graphs exist~\cite{Janus:2003,Janus:2006}.
It seems, however, quite complicated to efficiently build dual systolic graphs.
For example, we do not yet know of strongly polynomial time algorithms
for building them.

\begin{figure}[t]
\includegraphics[width=4cm]{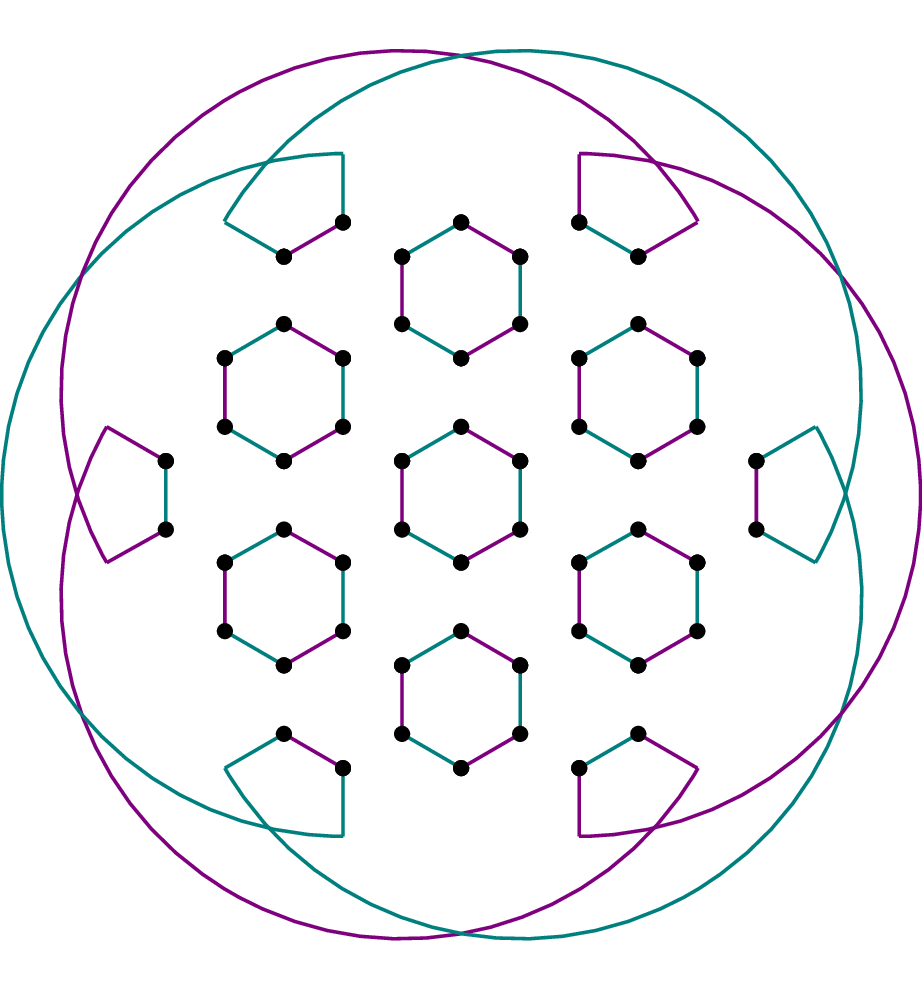}
  \caption{The dual systolic graph from Figure 1 with blue edges deleted.}
\end{figure}

\subsection{Isoperimetry}

The first property we study is the isoperimetric profile of dual systolic graphs.
Let $G  = (V,E)$ be a graph.
For $U \subseteq V$,
denote by $\partial(U) = \partial_G(U)$ the number of edges 
connecting $U$ and $V \setminus U$ in $G$.
The {\em edge expansion} of $U \neq \emptyset$ is
$$\phi(U) = \phi_G(U) = \frac{\partial(U)}{|U|}.$$
The isoperimetric profile of $G$ is the function defined by 
$$P(s) = P_G(s) = \min \{ \phi(U) : |U|=s\}.$$
It provides a lot of information on the structure of $G$.

The first result we state bounds the isoperimetric profile of pseudo-cubes.
Samorodnitsky proved the same result for the boolean cube~\cite{Samorodnitsky2008AML}.
We observe that his proof can be extended to all pseudo-cubes.\footnote{Here and below, logarithms are in base two.}

\begin{theorem}
For every $d$-dimensional pseudo-cube $G$ and $s>0$,
$$P_G(s) \geq d - \log s.$$
\end{theorem}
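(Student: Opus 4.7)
The plan is to restate the inequality as an upper bound on the number of edges internal to $U$ and then prove that bound by induction on the dimension $d$. Since $G$ is $d$-regular, $\partial(U) = d|U| - 2|E(U)|$, where $E(U)$ is the set of edges with both endpoints in $U$, so $\phi_G(U) \geq d - \log|U|$ is equivalent to $2|E(U)| \leq |U|\log|U|$. Let $X$ be uniform on $U$ and, for each color $i$, let $f_i$ be the involution sending a vertex to its color-$i$ neighbor. Then $2|E(U)|/|U| = \sum_{i=1}^d p_i$ with $p_i := \Pr[f_i(X) \in U]$, so it suffices to prove $\sum_i p_i \leq \log|U| = H(X)$.

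The argument proceeds by induction on $d$; the case $d=0$ is trivial. For the inductive step, delete the color-$d$ matching $M_d$ from $G$ to obtain a $(d-1)$-regular graph $G'$. Every cycle in $G'$ is a cycle in $G$ that avoids color $d$, so color independence is inherited and each connected component $C_j$ of $G'$ is itself a pseudo-cube of dimension $d-1$. For $i < d$ the involution $f_i$ preserves the components of $G'$, so $p_i = \sum_j q_j\, p_i^{(j)}$ where $q_j = |U \cap C_j|/|U|$ and $p_i^{(j)}$ is the analogue of $p_i$ inside $C_j$. The inductive hypothesis applied to each $C_j$ then yields
\[
\sum_{i<d} p_i \;\leq\; \sum_j q_j \log|U\cap C_j| \;=\; \log|U| - H(J),
\]
where $J$ is the random index of the component of $X$ in $G'$.

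It remains to show $p_d \leq H(J)$. Set $Y_d = \{X, f_d(X)\}$, the $M_d$-pair containing $X$. A direct calculation using the uniformity of $X$ on $U$ gives $H(X \mid Y_d) = p_d$, hence $H(Y_d) = H(X) - p_d$. Color independence now plays its essential role: it forces the two endpoints of every $M_d$-edge to lie in distinct components of $G'$, so the two elements of $Y_d$ always inhabit different components of $G'$. Consequently the pair $(Y_d, J)$ determines $X$ uniquely, giving $H(X) = H(Y_d, J) \leq H(Y_d) + H(J)$ by subadditivity, which rearranges to $p_d \leq H(J)$. Combined with the previous bound this yields $\sum_i p_i \leq \log|U|$, completing the induction.

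The main obstacle is the inequality $p_d \leq H(J)$. The boolean cube admits a clean Han-inequality proof thanks to its global coordinate structure, whereas a general pseudo-cube has no such product decomposition; the two-color entropy identity above, combined with the inductive decomposition of $G$ color by color, is what lets the argument go through.
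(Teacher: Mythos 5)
Your proof is correct, and while it shares the paper's information-theoretic viewpoint (both reduce to showing $\sum_i \Pr[f_i(X)\in U] \le H(X)$ for $X$ uniform on $U$), the argument structure is genuinely different. The paper runs a one-shot, non-inductive argument: using the nested family of ``components under color sets'' $X_{\{1\}}, X_{\{1,2\}}, \ldots$, it applies weak color independence to get $H(X\mid X_{\{i\}}) \le H(X_{\{1,\ldots,i-1\}}\mid X_{\{i\}})$, then the data-processing inequality, and finally a telescoping chain rule that collapses the whole sum to $H\bigl(X, X_{\{1\}},\ldots, X_{\{1,\ldots,d-1\}} \mid X_{\{1,\ldots,d\}}\bigr) \le H(X)$. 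You instead peel off one color and recurse on the components of $G_{-d}$; your key inequality $p_d \le H(J)$ (justified because color independence forces the two endpoints of every color-$d$ edge into distinct components, so $(Y_d, J)$ determines $X$) plays the role that data processing plays in the paper, and the rest is handled by the inductive hypothesis. Your route is arguably easier to verify step by step, and it in fact proves the stronger Theorem~\ref{thm:weakIso} essentially unchanged, since each level of the recursion only invokes color independence for the top color. The paper's chain-rule telescope is more compact and avoids an explicit induction. One small tightening available to you: since $J$ is a function of $X$ and, given $Y_d$, the value of $J$ determines $X$ (the endpoints lie in distinct components), you actually have the identity $H(X\mid Y_d) = H(J\mid Y_d)$, from which $p_d \le H(J)$ follows by ``conditioning reduces entropy'' without appealing to subadditivity of joint entropy.
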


The theorem is sharp for the boolean cube for all integers $s$
that are powers of two.
It can be thought of as saying that pseudo-cubes are small set expanders.
The study of small set expansion is motivated by its intimate connection with the unique games conjecture, its potential to provide insights into hardness of optimization problems, 
and its connection to metric embeddings 
(see~\cite{Stuerer:2010,Subhash:2015,5671307} and references within). 


The boolean cube is the canonical example of a small set expander. 
The small set expansion of various other graphs has been studied in several works. Noisy versions of the boolean cube were studied in~\cite{Kahn:1988,Mossel2005NoiseSO}. 
Small set expansion for noisy cubes is related to 
the "majority is stablest" theorem.
The authors of~\cite{BarakGHMRS15} considered a derandomized version of the noisy cube
and achieved a better trade-off between expansion and threshold rank (more on this later on).
The small set expansion of the multi-slice graph was studied in~\cite{Filmus:2018},
and of the Johnson graph was studied in~\cite{khot2018small}.
Another line of research includes characterizing the non-expanding subsets and showing that they are, in some sense, negligible~\cite{grassman}. 

Dual systolicity leads to a much stronger
bound on the isoperimetric profile. 

\begin{theorem}\label{theta_loglog}
    For every $d$-dimensional dual systolic graph $G$ and $s>1$,
    \begin{equation*}
    P_G(s) \geq d-8 (1+\log\log s ) .
    \end{equation*}
\end{theorem}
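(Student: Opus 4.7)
The plan is to prove the bound by induction on the dimension $d$, using a hereditary structure theorem for dual systolic graphs together with the pseudo-cube inequality of Theorem~1.

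First I would establish the structural fact that for every color $i \in [d]$, the connected components $C_1, \ldots, C_k$ of the subgraph of $G$ obtained by deleting all color-$i$ edges are themselves dual systolic graphs of dimension $d-1$. The $(d-1)$-regularity and the proper $(d-1)$-edge-coloring are immediate; color independence descends because any cycle inside $G[C_j]$ is a color-$i$-free cycle in $G$; and dual systolicity descends because any contraction of edges inside $G[C_j]$ is a sub-contraction of the same operation inside $G$, so the no-double-edge condition is inherited.

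Now fix $U \subseteq V(G)$ with $|U| = s \geq 2$ and a color $i$, and set $U_j = U \cap C_j$. Every edge of color different from $i$ lies inside a single component $C_j$, so
\begin{equation*}
\partial_G(U) \;=\; \partial_i(U) + \sum_j \partial_{G[C_j]}(U_j),
\end{equation*}
where $\partial_i(U)$ is the color-$i$ boundary of $U$ in $G$. Applying the inductive hypothesis inside each $G[C_j]$ when $|U_j| \geq 2$, and the trivial bound $\partial_{G[C_j]}(U_j) \geq (d-1)|U_j|$ when $|U_j| \leq 1$, together with the concavity of $x \mapsto 1+\log\log x$ on $[2,\infty)$ (which yields $\sum_j |U_j|(1+\log\log|U_j|) \leq s(1+\log\log s)$), gives
\begin{equation*}
\sum_j \partial_{G[C_j]}(U_j) \;\geq\; (d-1)s - 8s(1+\log\log s).
\end{equation*}
To complete the inductive step one must also recover the missing $s$ in the leading term by extracting information from the color-$i$ boundary, using $\partial_i(U) = s - 2e_i(U)$ where $e_i(U)$ counts color-$i$ edges with both endpoints in $U$; averaging the previous display over all $d$ choices of $i$ and using $\sum_i \partial_{\neq i}(U) = (d-1)\partial_G(U)$ closes the recursion.

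The main obstacle is the dimension-free constant $8$. A naive averaging over colors inflates the coefficient by a factor of $d/(d-1)$, and if that loss compounds along the induction one obtains a constant linear in $d$ rather than an absolute constant. Overcoming this loss requires using dual systolicity a second time: because each contracted graph $H_i$ is simple, the color-$i$ edges inside $U$ cannot be too dense, so $e_i(U)$ is controlled well below the trivial $s/2$ bound. Combined with the pseudo-cube inequality of Theorem~1 in the regime of small $s$ (where it is already at least as strong as the target and can be invoked directly), this tightened estimate on $\sum_i e_i(U)$ is precisely what absorbs the $d/(d-1)$ error and produces the doubly-logarithmic bound. This is the step where dual systolicity genuinely yields an exponential improvement over the pseudo-cube bound.
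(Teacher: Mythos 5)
Your outline diverges from the paper's argument in a structurally significant way, and it contains a genuine gap at exactly the step you flag as difficult.

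Your plan is a direct induction on the dimension: delete color-$i$ edges, decompose $U$ across the resulting components, apply the inductive hypothesis in dimension $d-1$, and average over the $d$ choices of $i$. The averaging identity $\sum_i \partial_{\neq i}(U) = (d-1)\partial_G(U)$ then yields, after a line of algebra, $\partial_G(U) \geq ds - \tfrac{8d}{d-1}\,s(1+\log\log s)$. As you observe, the error coefficient picks up a multiplicative $\tfrac{d}{d-1}$ at each level, so if one telescopes the induction all the way to dimension one the constant grows linearly in $d$. You call this the main obstacle, and you are right. But the proposed repair --- using simplicity of the color-$i$ contracted graph to upper bound $e_i(U)$ --- is not carried out, and I don't think it can be carried out in the form you state. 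Simplicity of $H_i$ gives $e_i(U) \leq \binom{k_i}{2}$, where $k_i$ is the number of color-$i$-free components meeting $U$. This is a strong bound only when $k_i$ is small; when $k_i$ is of order $s$ (many singletons matched by color $i$ inside $U$), it is weaker than the trivial $e_i(U) \leq s/2$, and in that regime it gives nothing. Conversely, when $k_i$ is small, one wants to exploit instead that $U$ is spread over few components. There is a genuine trade-off between these two regimes, and your sketch does not make it quantitative; asserting that the tightened $e_i$ estimate ``absorbs the $d/(d-1)$ error'' is a wish rather than an argument. A second, smaller issue is that your concavity/monotonicity step needs the values $|U_j| \in \{0,1\}$ to be handled separately, since $\log\log$ is not defined there, but this is cosmetic.

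The paper takes a different route precisely to sidestep the compounding loss. Rather than inducting on $d$, it defines ``dimension-independent bounding functions'' $g$ with $P_G(s) \geq d - g(s)$ for all dimensions simultaneously, and proves a bootstrap lemma: for every $\epsilon > 0$, the functional $[\mathcal{F}_\epsilon(g)](s) = g(2^{4/\epsilon}) + \epsilon\log s$ again produces a dimension-independent bounding function. The inner induction in that lemma is on the pair $(d, |U|)$, working only with the components of $G_{-d}$, and is split into three cases parametrized by $\epsilon$: an unbalanced component (where the matching bound $e(U_i,U_{\neg i}) \leq |U_i|$ wins), few components of comparable size (where dual systolicity gives $e(U_i,U_{\neg i}) \leq k$ and $k \leq 2^{2/\epsilon}$ is small), and small $s$ (where $g$ itself suffices). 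The key point is that each application of $\mathcal{F}_\epsilon$ costs only an additive $g(2^{4/\epsilon})$ plus a geometrically shrinking $\epsilon\log s$; no dimension-dependent multiplicative loss appears. Starting from the pseudo-cube bound $g_0(s)=\log s$ and iterating $\ell$ times (optimizing $\epsilon$ at each stage) one obtains $g_\ell(s) = 4\ell\,\log^{1/\ell}s$; taking $\ell = \lceil\log\log s\rceil$ gives the $8(1+\log\log s)$ bound. This iteration depth is only $O(\log\log s)$, independent of $d$, which is exactly what your telescoping induction on dimension cannot achieve.

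Your hereditary structural claim (that deleting one color from a dual systolic graph leaves components that are dual systolic of one lower dimension) is correct and could be a useful ingredient in some argument, but on its own it does not rescue the averaging scheme.
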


This is an exponential improvement over the previous theorem. 
Pseudo-cubes have strong expansion, say of at least $\tfrac{d}{2}$,
for sets of size at most exponential in $d$.
The same expansion for dual systolic graphs holds
for sets of size {\em doubly exponential} in $d$.
This bound is sharp (up to the constant before the $\log \log$)
because the JS construction yields dual systolic graphs
of size doubly exponential in $d$.

Most proofs of small set expansion are analytic and go through
hypercontractivity. 
Our proof follows a different path.
The bound for pseudo-cubes uses information theory
(following Samorodnitsky's footsteps).
The bound for dual systolic graphs has three parts.
The first part is the bound for pseudo-cubes.
The second part is identifying a dynamics
that the isoperimetric profile satisfies.
On a high-level, there is a functional $\mathcal F$ so that
if $P$ is the isoperimetric profile of dual systolic graphs,
then ${\mathcal F}(P)$ is also such a profile.
The third part is finding the fixed point of this dynamics.
This dynamics can be thought of as a bootstrapping 
mechanism for proving isoperimetric inequalities 
that relies on dual systolicity.
For more details, see Section~\ref{sec:dualSys}.

\subsection{Explicit constructions}

The JS complexes are defined via a collection of groups.
This collection of groups is inductively constructed,
but we do not know of an efficient algorithm that implements this construction.
In this section, we weaken the dual systolic condition.
This leads to explicit constructions and at the same time
we get the same isoperimetric behavior as for dual systolic graphs. 

The first notion we define is that of a {\em weak} pseudo-cube.
We replace color independence with a weaker requirement.

\begin{definition}
We say that a graph $G = (V,E)$ is a {\em weak} pseudo-cube if
(i) it is $d$-regular, (ii) it has an edge $d$-coloring, and (iii)
the following two conditions hold.
First, the graph obtained from $G$ by contracting all edges of color $\neq d$
does not have self-loop.
Second, let $G_{-d}$ denote the graph obtained from $G$ by deleting all
edges of color $d$.
Every connected component in $G_{-d}$ is a $(d-1)$-dimensional pseudo-cube.
This definition is inductive;
for $d=1$, a weak pseudo-cube is a perfect matching.
\end{definition}

Property (iii) is called {\em weak} color independence.
Weak pseudo-cubes satisfy the same isoperimetric inequality
as the one stated for pseudo-cubes above.

\begin{theorem}
\label{thm:weakIso}
For every $d$-dimensional weak pseudo-cube $G$ and $s>0$,
$$P_G(s) \geq d - \log s.$$
\end{theorem}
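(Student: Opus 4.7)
The plan is to run Samorodnitsky's entropy-style argument on each component of $G_{-d}$ separately (invoking the already-established Theorem~1 for pseudo-cubes) and then handle the color-$d$ matching by a self-contained combinatorial inequality. Fix $U \subseteq V$ with $|U| = s$, let $C_1, \ldots, C_k$ be the connected components of $G_{-d}$, and set $U_i = U \cap C_i$, $s_i = |U_i|$. Each $C_i$ is a $(d-1)$-dimensional pseudo-cube, so Theorem~1 applied inside $C_i$ gives
\[\partial_{C_i}(U_i) \geq s_i\bigl((d-1) - \log s_i\bigr).\]

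Every vertex has exactly one color-$d$ edge, whose other endpoint lies in a different component by the weak color-$d$ independence. Let $f_d(v)$ denote this neighbor and set $T = \{v \in U : f_d(v) \in U\}$, so the color-$d$ contribution to $\partial_G(U)$ equals $s - |T|$. Adding this to the sum of the component bounds gives
\[\partial_G(U) \geq s(d-1) - \sum_i s_i \log s_i + (s - |T|).\]
The target inequality $\partial_G(U) \geq s(d - \log s)$ therefore reduces to the single claim
\[|T| \leq \sum_i s_i \log(s/s_i).\]

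To prove this claim I use that $f_d$ restricted to $T$ is a fixed-point-free involution and that every matched pair lies in distinct components. Writing $T_i = T \cap U_i$, this forces $|T_i| \leq s_i$ and $|T_i| \leq |T| - |T_i| = \sum_{i' \neq i}|T_{i'}| \leq s - s_i$. I then verify per component that $s_i \log(s/s_i) \geq |T_i|$: for \emph{small} components with $s_i \leq s/2$ this follows from $\log(s/s_i) \geq 1 \geq |T_i|/s_i$; for the unique \emph{large} component (if one exists) with $s_i > s/2$, setting $t = s_i/s \in (1/2, 1]$ and using the inequality $t \log(1/t) \geq 1 - t$ on $[1/2, 1]$ (a consequence of the concavity of binary entropy) gives $s_i \log(s/s_i) \geq s - s_i \geq |T_i|$. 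Summing over $i$ completes the reduction.

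The main obstacle is the \emph{large class} case of this combinatorial inequality. There the naive bound $s_i \log(s/s_i) \geq |T_i|$ does not follow from $|T_i| \leq s_i$ alone, since $\log(s/s_i) < 1$. Both the matching constraint (which replaces $s_i$ by $s - s_i$) and the non-trivial binary-entropy inequality $t \log(1/t) \geq 1 - t$ on $[\tfrac{1}{2}, 1]$ are needed. This is precisely where the weak color-$d$ independence is decisive: without it a matched pair $(v, f_d(v))$ could lie inside a single component, destroying the link between $|T|$ and the partition entropy.
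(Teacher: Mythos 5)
Your proof is correct, but it takes a genuinely different route from the paper. The paper proves the statement directly by an information-theoretic argument: it writes $\phi(U)=d-\tfrac{2\sum_i e_i(U)}{|U|}$ and bounds $\sum_i H(X\mid X_{\{i\}})\le H(X)$ for a uniform vertex $X$ of $U$, using the reachability sets $X_{\{1,\dots,i\}}$, the data processing inequality and the chain rule; all $d$ colors are handled simultaneously and no prior theorem is invoked. You instead peel off only color $d$: apply the pseudo-cube bound (Theorem~1) inside each component of $G_{-d}$, count the color-$d$ boundary as $s-|T|$, and reduce everything to the combinatorial claim $|T|\le\sum_i s_i\log(s/s_i)$, which you verify correctly via the matching structure ($|T_i|\le\min(s_i,\,s-s_i)$, using that weak color independence forces color-$d$ partners into distinct components) together with $t\log(1/t)\ge 1-t$ on $[\tfrac12,1]$. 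This buys a more elementary, hands-on argument that isolates exactly where weak color independence enters, and it structurally anticipates the boundary-splitting identity $\partial(U)=\partial(U_i)+\partial(U_{\neg i})-2e(U_i,U_{\neg i})$ that the paper later exploits in the bootstrapping lemma for dual systolic graphs. Two caveats. First, your argument is not self-contained within the paper: it consumes Theorem~1, which the paper justifies only by saying Samorodnitsky's proof extends (and whose written proof is in effect the very entropy argument you are replacing); note, though, that if one reads the definition of weak pseudo-cube inductively (components of $G_{-d}$ are weak pseudo-cubes of dimension $d-1$), your computation becomes an induction on $d$ with the trivial base case of a perfect matching, and is then fully self-contained and genuinely bypasses the entropy machinery. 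Second, the justification ``concavity of binary entropy'' for $t\log(1/t)\ge 1-t$ on $[\tfrac12,1]$ is loose as stated; a clean version is to set $u=1/t\in[1,2]$ and use $\log_2 u\ge u-1$ there (concavity of $\log_2$ with equality at $u=1,2$), or to note that $t\mapsto t\log(1/t)-(1-t)$ is concave and vanishes at $t=\tfrac12$ and $t=1$. These are presentational, not substantive, gaps.
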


We now define the weak notion of dual systolicity.

\begin{definition}
A graph $G$ is {\em weakly dual systolic}
if it is a weak pseudo-cube of dimension $d$ so that
(a) the graph obtained from $G$ by contracting all edges of color~$\neq d$
is simple,
and (b) every connected component in $G_{-d}$ is weakly systolic
of dimension $d-1$.
This definition is inductive;
for $d=1$, a weakly systolic graph is a perfect matching.
\end{definition}

Weakly dual systolic graphs also satisfy the same strong
isoperimetric inequality as dual systolic graphs. 

\begin{theorem}\label{theta_loglog_weak}
    For every $d$-dimensional weakly dual systolic graph $G$ and $s>1$,
    \begin{equation*}
    P_G(s) \geq d-8 (1+\log\log s) .
    \end{equation*}
\end{theorem}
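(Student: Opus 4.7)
Set $h(u):=1+\log\log u$ for $u\ge 2$ and $h(1):=0$, so the theorem asserts $\partial_G(U)\ge s(d-8h(s))$. The plan is to induct on the dimension $d$, following the template of Theorem~\ref{theta_loglog} for dual systolic graphs. The recursive clause~(b) in the definition of weakly dual systolic makes the induction work: each connected component of $G_{-d}$ is itself a weakly dual systolic graph of dimension $d-1$. The base case $d=1$ is vacuous since $1-8h(s)\le 0$ for $s\ge 2$ (a one-dimensional weak pseudo-cube is a perfect matching, so $P_G(s)\ge 0$ trivially).

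For the inductive step, let $C_1,\dots,C_k$ be the components of $G_{-d}$ meeting $U$ and set $u_j:=|U\cap C_j|$, so $\sum_j u_j=s$. By the inductive hypothesis (with $u_j=1$ handled directly, since then $\partial_{C_j}(U\cap C_j)=d-1$), each $\partial_{C_j}(U\cap C_j)\ge u_j(d-1)-8u_jh(u_j)$; these are non-$d$-colored $G$-edges leaving $U$. Each vertex has exactly one color-$d$ edge, and by the weak pseudo-cube property every color-$d$ edge joins two distinct components of $G_{-d}$. Letting $g$ be the number of color-$d$ edges with both endpoints in $U$, the color-$d$ contribution to $\partial_G(U)$ is exactly $s-2g$. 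Crucially, clause~(a) — simplicity after contracting all non-$d$ edges — forces $g\le\binom{k}{2}$, since at most one color-$d$ edge joins any pair of components. Combining,
\[\partial_G(U)\ \ge\ sd-8H-2g,\qquad H:=\sum_j u_jh(u_j),\]
so the result reduces to the arithmetic \emph{master inequality} $4H+g\le 4sh(s)$, subject to $u_j\ge 1$, $\sum_j u_j=s$, $g\le\binom{k}{2}$, and $2g\le\sum_j\min(u_j,k-1)$ (from the fact that the color-$d$ edges form a matching).

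I would verify the master inequality by splitting on the shape of the partition $(u_j)$. In the concentrated regime, the convexity of $u\mapsto uh(u)$ on $[2,\infty)$ makes $H$ maximal at configurations with one large component and $k-1$ singletons; there one has $sh(s)-H\ge(k-1)h(s)$, while the simplicity/matching constraints give $g\le\min(k-1,s/2)$, so the inequality follows immediately from $h(s)\ge 1$. In the balanced regime $u_j\approx s/k$, Jensen's inequality applied to the concave function $\log\log$ yields $H\le sh(\bar u)$ with $\bar u:=\sum_j u_j^2/s\ge s/k$, hence $sh(s)-H\ge s\log(\log s/\log(s/k))$; comparing this with $g\le\binom{k}{2}$ when $k\le\sqrt s$ and with $g\le s/2$ when $k>\sqrt s$ finishes the check via elementary estimates on $\log(\log s/\log(s/k))$. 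Intermediate partitions interpolate between these two extremes by the same convexity/concavity considerations.

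The main obstacle is this verification of the master inequality: it is precisely the ``fixed point'' of the dynamics referenced in the introduction, and pinning down the trade-off between the simplicity constraint on $g$ and the growth of $H$ as the partition fragments is where the structural strength of (weak) dual systolicity over a bare weak pseudo-cube pays off and yields the doubly-exponential improvement over Theorem~\ref{thm:weakIso}. The constant $8$ provides a comfortable margin at every step of the argument; optimizing it down to the tight value would require a sharper analysis of the worst-case regime $k\approx\sqrt s$ with $u_j\approx\sqrt s$.
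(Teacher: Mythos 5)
Your reduction is sound, and it is genuinely different from the paper's route: you induct on $d$ directly over the components of $G_{-d}$, use the identity $\partial_G(U)=\sum_j\partial_{C_j}(U\cap C_j)+s-2g$, and the constraints $g\le\binom{k}{2}$ and $2g\le\sum_j\min(u_j,k-1)$ are correct consequences of weak dual systolicity; this correctly reduces the theorem to your ``master inequality'' $4\sum_j u_jh(u_j)+g\le 4sh(s)$. (The paper never proves a one-shot inequality of this kind: Lemma~\ref{bootstrap_mechanism} splits $U$ into just two parts $U_i$ and $U_{\neg i}$, needs only the single inequality $2e(U_i,U_{\neg i})/s\le\epsilon h(p_i)$ for \emph{some} $i$, and shows that the $\epsilon$-parameterized functional $\mathcal{F}_\epsilon$ maps bounding functions to bounding functions; the final bound then comes from iterating Lemma~\ref{log_roots} roughly $\log\log s$ times.) The problem is that the master inequality is the entire content of the theorem, and your verification of it has a genuine gap.

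Concretely: (i) the convexity/extreme-point argument controls only $H=\sum_j u_jh(u_j)$ at fixed $k$, but the admissible $g$ also depends on the partition through $\tfrac12\sum_j\min(u_j,k-1)$. The $H$-maximizing partition (one big part plus $k-1$ singletons) forces $g\le k-1$, while balanced partitions allow $g$ up to about $\min\bigl(\binom{k}{2},s/2\bigr)$ with far smaller slack $sh(s)-H$; so verifying the inequality at the $H$-maximizer says nothing about intermediate partitions. Indeed, if you tried to use only the concentrated configuration at fixed $k$ you would need $\binom{k}{2}\le 4(k-1)$, which already fails for moderate $k$; the sentence ``intermediate partitions interpolate'' is an assertion, not an argument, and it is precisely where the work lies. (ii) The Jensen step runs the wrong way: from $H\le sh(\bar u)$ and $\bar u=\sum_j u_j^2/s\ge s/k$ you obtain an \emph{upper} bound on $s(h(s)-h(\bar u))$, not the claimed lower bound $sh(s)-H\ge s\log(\log s/\log(s/k))$, which is exact only for the exactly balanced partition. (iii) Minor but symptomatic: $u\mapsto uh(u)$ is not convex on all of $[2,\infty)$ (it is concave on $(2,e)$), and $h$ jumps at $u=1$. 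The regime $k\approx\sqrt s$, $u_j\approx\sqrt s$, where $g$ can be of order $s$ against $4(sh(s)-H)\approx 4s$, is not merely where the constant could be tightened, as your last sentence suggests; it and the partitions interpolating toward it are exactly what a complete proof must handle --- for instance by splitting components at size $\sqrt s$ and playing the matching bound off against the $\binom{k}{2}$ bound --- or else by adopting the paper's scheme, in which the $\epsilon$-tradeoff and fixed-point iteration replace any such global optimization over partitions.
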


Even though we weakened the requirements 
we obtained the same isoperimetric behavior.
But we also obtained one more important thing.
There are explicit and simple constructions of weakly dual systolic
graphs. 

Before describing the construction, we answer a basic question. 
{\em What is the least size of a weakly systolic graph of dimension $d$?}
The size of $d$-dimensional pseudo-cubes is at least roughly $\exp(d)$.
The size of $d$-dimensional dual systolic graphs is much larger,
at least roughly $\exp(\exp(d))$.
Indeed, if we denote by $F(d)$ the minimal number of vertices 
in a $d$-dimensional weakly dual systolic graph, we have the recurrence relation:
$F(1) = 2$ and for $d>1$,
\begin{equation}\label{simple_rec}
    F(d) \geq F(d-1) \big( F(d-1)+1\big) .
\end{equation}
The inequality holds because if we look at a single connected component of $G_{-d}$
of size $S$ then it has $S$ neighboring components,
and the size of each of these $S+1$ components is at least $F(d-1)$.

\begin{claim}
\label{clm:H}
For every integer $d \geq 1$,
there is an explicit weakly dual systolic graph $CP^{(d)}$ 
of dimension $d$ with $n^{(d)}$ vertices.
In addition $n^{(1)}=2$ and $n^{(d)} = n^{(d-1)} ( n^{(d-1)}+1)$ for $d>1$.
\end{claim}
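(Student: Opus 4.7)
The plan is to define $CP^{(d)}$ inductively and verify the weakly dual systolic conditions by a short induction on $d$, using the recursive structure of the definition.

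\textbf{Construction.} The base case is $CP^{(1)}$, the perfect matching on two vertices, so $n^{(1)}=2$. For the inductive step, assume $CP^{(d-1)}$ has been built and has $n^{(d-1)}$ vertices. Set $N = n^{(d-1)}+1$ and take $N$ disjoint copies $H_1,\dots,H_N$ of $CP^{(d-1)}$. Fix, for each $i \in [N]$, a bijection between the vertex set of $H_i$ and $[N]\setminus\{i\}$; this lets me label every vertex of $CP^{(d)}$ by an ordered pair $(i,j)$ with $i\neq j$. Inside each $H_i$, keep all edges with their colors in $[d-1]$. Then add color-$d$ edges using the involution $(i,j)\leftrightarrow(j,i)$. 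This matching is well-defined and covers every vertex exactly once, so $CP^{(d)}$ is $d$-regular, has a proper edge $d$-coloring, and has $n^{(d)} = N \cdot n^{(d-1)} = n^{(d-1)}(n^{(d-1)}+1)$ vertices.

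\textbf{Verification.} To check that $CP^{(d)}$ is a weak pseudo-cube, note that removing the color-$d$ edges splits $CP^{(d)}$ into its $N$ copies $H_1,\dots,H_N$; by induction each $H_i$ is a weakly dual systolic graph, and in particular a pseudo-cube, of dimension $d-1$. Contracting the edges of colors $1,\dots,d-1$ collapses each $H_i$ to a single vertex, and each color-$d$ edge $(i,j)\leftrightarrow(j,i)$ becomes an edge between the distinct vertices representing $H_i$ and $H_j$. Hence the contracted graph has no self-loop; in fact, since for each unordered pair $\{i,j\}$ there is exactly one color-$d$ edge between $H_i$ and $H_j$, the contracted graph is the simple complete graph $K_N$. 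This already verifies property~(a) of Definition~\ref{dualsystolic}'s weak version (the contracted graph is simple), while property~(b) is exactly the inductive hypothesis applied to each $H_i$.

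\textbf{Remarks.} The two recursive requirements of the weakly dual systolic definition align perfectly with the two layers of the construction: the ``inside-copy'' layer inherits weak dual systolicity from $CP^{(d-1)}$, and the ``between-copies'' layer is a matching whose contraction yields a clique, which is the strongest possible simpleness condition. The vertex count identity is immediate from $N=n^{(d-1)}+1$. The only step that requires care is checking that the involution $(i,j)\mapsto(j,i)$ really is a perfect matching on the labelled vertex set, which amounts to noting that the label $j$ of a vertex in $H_i$ is exactly the index of the copy its color-$d$ partner belongs to; there is no obstacle beyond this bookkeeping, so no step is particularly hard.
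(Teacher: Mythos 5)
Your proposal is correct and uses essentially the same construction as the paper: the replacement product of $CP^{(d-1)}$ with the clique $K_{n^{(d-1)}+1}$, just with a different (and arguably cleaner) explicit labeling of the inter-copy matching $(i,j)\leftrightarrow(j,i)$ instead of the paper's modular-arithmetic matching. You verify the weakly dual systolic conditions more explicitly than the paper's terse proof does; the only very minor slip is the parenthetical ``in particular a pseudo-cube,'' where ``weak pseudo-cube'' is what the inductive hypothesis gives and what the definition requires.
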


\begin{figure}[t]
\centering
\subfloat{
\includegraphics[width=0.2\textwidth]{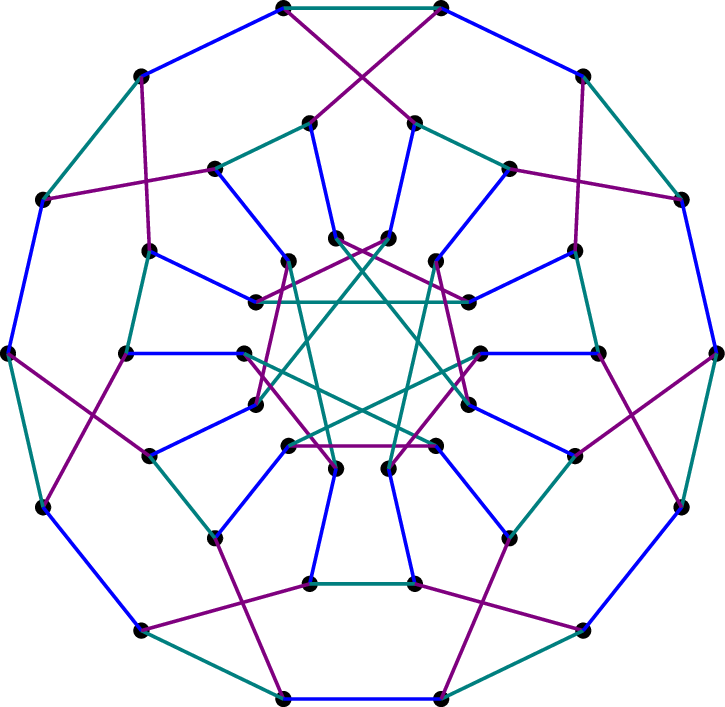}
}
\subfloat{
\includegraphics[width=0.2\textwidth]{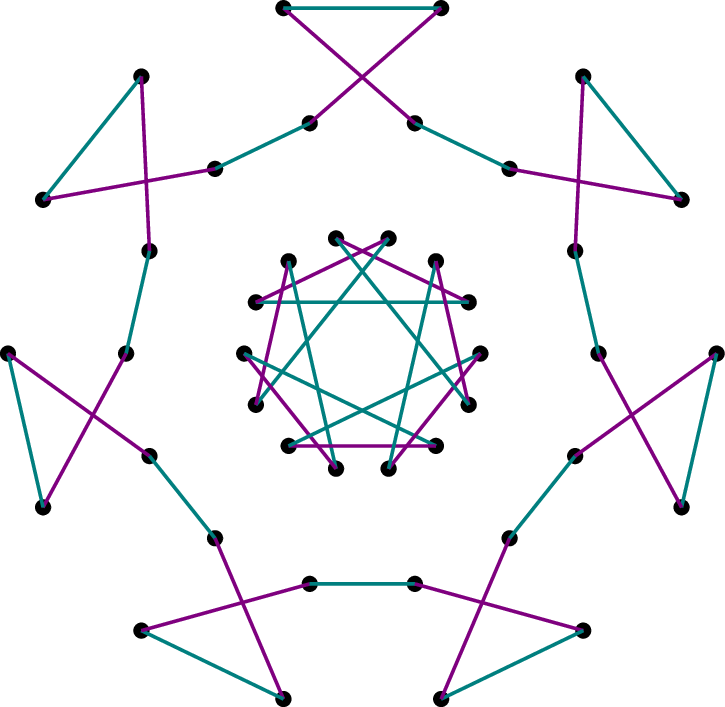}
}
\\
\vspace{0.1cm}
\subfloat{
\includegraphics[width=0.2\textwidth]{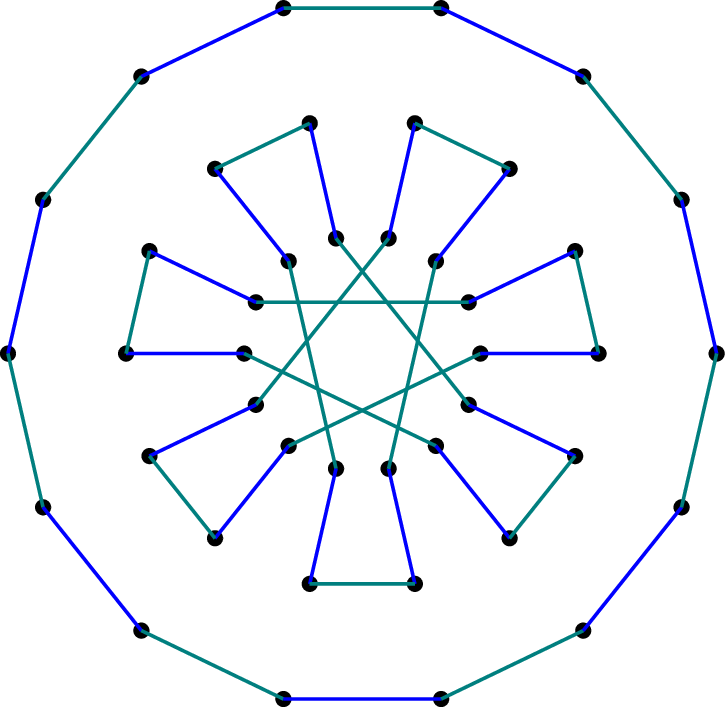}
}
\subfloat{
\includegraphics[width=0.2\textwidth]{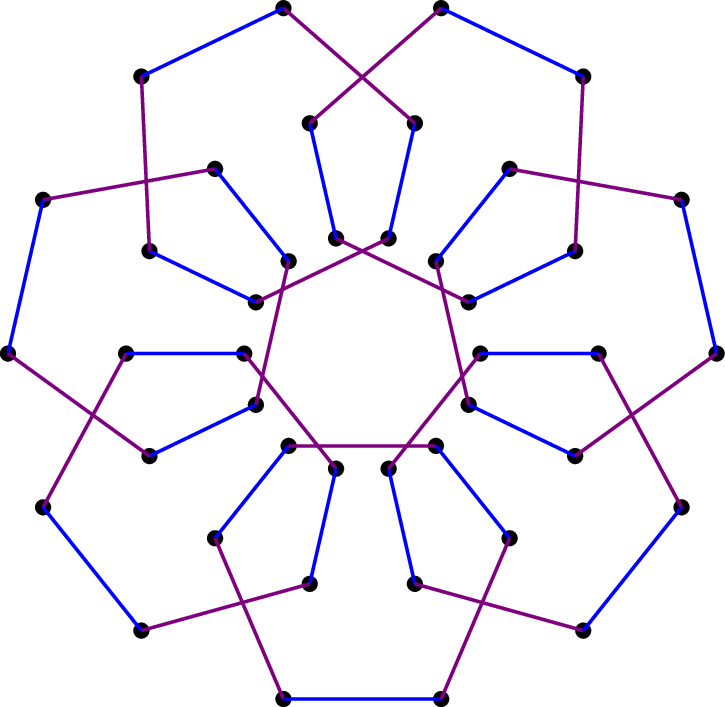}
}
\caption{The graph $CP^{(3)}$ is displayed in the top-left corner. The other three images show this graph when we delete edges of a specific color. Only the green color 
on the bottom-right ``partitions the graph'' in the desired way.}
\label{fig:clique_prod}

\end{figure}

The graph $CP^{(d)}$ is called the {\em $d$-dimensional clique product.}
The construction of clique products is by repeated applications of replacement products of cliques~\cite{wigderson2002entropy,Hoory2006ExpanderGA}. 
See Figure~\ref{fig:clique_prod} for an example.

The clique products show that inequality~\eqref{simple_rec} is
in fact an equality. 
The $d$-dimensional clique product is the 
smallest possible $d$-dimensional weakly systolic graph.
In some sense, clique products are the dual systolic versions of the boolean cubes
(which are the smallest possible pseudo-cubes).
Below we describe one application of clique products.
It seems reasonable that they can be used in future
applications as well.

\begin{proof}[Proof of Claim~\ref{clm:H}]
Take $CP^{(1)}$ to be a single edge. 
Construct $CP^{(d)}$ from $CP^{(d-1)}$ by taking its replacement product with 
the clique of size $n^{(d-1)}+1$.
Specifically, take $n^{(d-1)} +1$ disjoint copies of $CP^{(d-1)}$, 
and connect node $i \in \{1,2,\ldots,n^{(d-1)}\}$ 
in copy $j \in \{0,1,\ldots,n^{(d-1)}\}$ to node $-i$ in copy $i+j$,
where the operations are modulo $n^{(d-1)}+1$. 
Color all these new edges with color~$d$.
\end{proof}

\subsection{An application} 

The following question 
came up in the study of the unique games conjecture:
{\em what is the relationship between threshold rank
and small set expansion?}

Let $G$ be a regular graph with $n$ vertices, and let $M$ be its normalized adjacency matrix.
The matrix $M$ is symmetric and has $n$ eigenvalues
(the maximum eigenvalue is one).
The {\em threshold rank} of $G$ with parameter $\epsilon>0$,
denoted by $TR_{1-\epsilon}(G)$, is defined
to be the number of eigenvalues of $M$ that are at least $1-\epsilon$.

Threshold rank is important in algorithms for the unique games conjecture~\cite{Kolla10,5671307}.
The authors of~\cite{5671307} bounded from above 
the threshold rank of small set expanders.
The authors of~\cite{BarakGHMRS15} constructed small
set expanders with relatively large threshold rank.
A full understanding of the relationship between threshold rank 
and small set expansion remains an open problem.
The clique products are (somewhat) small
set expanders with high threshold rank.


\begin{theorem}
\label{thm:trhRk}
Let $d>2$, let $CP = CP^{(d)}$ and let $n = n^{(d)}$.
Let $0 < k \leq \tfrac{d}{2}$ be an integer and
$\epsilon \geq \frac{2k}{d}$.
Then,
\begin{equation*}
TR_{1-\epsilon}(CP) \geq \frac{n^{1-2^{-k}}}{2}. 
\end{equation*}
\end{theorem}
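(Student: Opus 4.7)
The plan is to decompose the normalized adjacency matrix $M$ of $CP^{(d)}$ into a block-diagonal part coming from the $N_1 := n^{(d-1)}+1$ sub-copies of $CP^{(d-1)}$ and a single matching matrix for the color-$d$ edges, and then to run an induction on $k$ using Weyl's eigenvalue inequality. The key identity is $M = \tfrac{d-1}{d}\,\tilde M_{d-1} + \tfrac{1}{d}\, M_d$, where $\tilde M_{d-1}$ is the block-diagonal matrix whose $N_1$ blocks are identical copies of the normalized adjacency $M_{d-1}$ of $CP^{(d-1)}$, and $M_d$ is the symmetric permutation matrix of the color-$d$ perfect matching. Since $M_d$ is an involution, $\lambda_{\min}(M_d) = -1$, so Weyl's inequality gives $\lambda_i(M) \geq \tfrac{d-1}{d}\,\lambda_i(\tilde M_{d-1}) - \tfrac{1}{d}$ for every index~$i$, while every eigenvalue of $M_{d-1}$ appears in $\tilde M_{d-1}$ with its multiplicity multiplied by $N_1$.

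I induct on $k$ (inside an outer induction on $d$). For the base case $k=1$, the $N_1$ indicators of the copies are orthogonal eigenvectors of $\tilde M_{d-1}$ with eigenvalue $1$, so Weyl produces at least $N_1$ eigenvalues of $M$ that are $\geq 1 - \tfrac{2}{d}$; since $N_1^{2} \geq n^{(d)}$, we have $N_1 \geq \sqrt{n^{(d)}} \geq \tfrac{1}{2}\,n^{\,1-2^{-1}}$, as required.

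For the inductive step $k \geq 2$, I apply the theorem to $CP^{(d-1)}$ with parameter $k-1$ (which is legal because $1 \leq k-1 \leq (d-1)/2$). It supplies at least $(n^{(d-1)})^{1-2^{-(k-1)}}/2$ eigenvalues of $M_{d-1}$ exceeding $1 - \tfrac{2(k-1)}{d-1}$; multiplying by $N_1$ for the block structure and plugging into the Weyl bound yields $\tfrac{N_1}{2}(n^{(d-1)})^{1-2^{-(k-1)}}$ eigenvalues of $M$ above
\begin{equation*}
\tfrac{d-1}{d}\Bigl(1 - \tfrac{2(k-1)}{d-1}\Bigr) - \tfrac{1}{d} \;=\; 1 - \tfrac{2k}{d}.
\end{equation*}
Writing $a = n^{(d-1)}$ so that $n^{(d)} = a(a+1)$, the target bound $\tfrac{1}{2}(n^{(d)})^{1-2^{-k}}$ reduces to $((a+1)/a)^{2^{-k}} \geq 1$, which is immediate.

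The main obstacle I anticipate is ensuring that the cross-copy perturbation $\tfrac{1}{d}\,M_d$ does not destroy the inductively built eigenvalues. The saving grace is that $\|M_d\|_{\mathrm{op}} = 1$ independently of $d$, so each inductive step costs only an additive $\tfrac{1}{d}$ in the eigenvalue threshold --- exactly matching the $2k/d$ budget in the target inequality. In particular, no finer structure of the color-$d$ matching than the fact that it is an involution should be needed, and the entire argument avoids any explicit analysis of eigenvectors beyond the copy-indicators in the base case.
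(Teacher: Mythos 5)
Your proposal is correct, and it takes a genuinely different route from the paper. The paper argues in one shot: it fixes the span $U$ of all eigenvectors with eigenvalue at least $1-\epsilon$, observes that $CP^{(d)}$ decomposes into $n^{(d)}/n^{(d-k)}$ copies of $CP^{(d-k)}$, and computes that each normalized copy-indicator $v_j$ has Rayleigh quotient $\langle v_j, M v_j\rangle = 1-\tfrac{k}{d}$ (each vertex keeps $d-k$ of its $d$ edges inside its copy); since vectors orthogonal to $U$ have quotient at most $1-\tfrac{2k}{d}$, each $v_j$ has at least half its squared mass in $U$, and summing over the orthonormal family $\{v_j\}$ gives $T \geq \tfrac{1}{2}\,n^{(d)}/n^{(d-k)}$, after which the recurrence $n^{(d)} > (n^{(d-k)})^{2^k}$ finishes. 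You instead peel off one level at a time: the decomposition $M = \tfrac{d-1}{d}\tilde M_{d-1} + \tfrac{1}{d}M_d$, Weyl's inequality with $\lambda_{\min}(M_d) = -1$, the $N_1$-fold multiplication of multiplicities in the block-diagonal part, and an induction on $k$ (with the base case handled by the copy indicators); your bookkeeping is right --- the parameter check $k-1 \leq \tfrac{d-1}{2}$, $d-1>2$ is valid, the threshold arithmetic $\tfrac{d-1}{d}\bigl(1-\tfrac{2(k-1)}{d-1}\bigr)-\tfrac{1}{d} = 1-\tfrac{2k}{d}$ is exact, and the count $(a+1)\,a^{1-2^{-(k-1)}} \geq \bigl(a(a+1)\bigr)^{1-2^{-k}}$ reduces, as you say, to $\bigl((a+1)/a\bigr)^{2^{-k}} \geq 1$. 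The paper's argument buys a non-inductive proof with no spectral perturbation machinery and yields the cleaner intermediate statement $T \geq \tfrac{1}{2}\,n^{(d)}/n^{(d-k)}$ (plus the structural fact that every copy indicator is half-supported on the top eigenspace); your argument buys a modular, level-by-level view in which each recursion step costs exactly the additive $\tfrac{2}{d}$ budgeted in the threshold, using only regularity, the block structure, and the fact that the color-$d$ matching is an involution.
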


For $k = 1$, we get a lower bound of $\Omega\left(\sqrt{n}\right)$ on the threshold rank
with $\epsilon = \tfrac{2}{d}$.
In other words, 
we have a graph of size $n$
so that the threshold rank $TR_{1-\epsilon}$ is polynomially large,
for $\epsilon \leq \frac{2}{1+\log\log n}$ that tends to zero as $n$ tends to infinity.
When we think of $\epsilon$ as a constant, the integer
$k$ becomes $\tfrac{\epsilon d}{2}$,
and the threshold rank is almost as high as it can be 
($TR_{1-\epsilon}$ is at least $n^{1-o(1)}$). 

A rough comparison can be made for example with Theorem~4.14 in~\cite{BarakGHMRS15}. Their construction depends on two parameters. 
If we plug-in $\epsilon$ as one of their parameters,
and $\tfrac{1}{2}$ as the second parameter, 
we see that the threshold rank of $CP^{(d)}$ is larger than in their construction, 
but the small set expansion properties in their construction is better.
The two constructions are incomparable.

The ideas developed in our paper can perhaps lead to better constructions
in the small set expansion versus threshold rank question.
A specific approach that seems reasonable
is weakening the dual systolicity condition;
instead of requiring at most one edge between two connected components,
we can upper bound the number of edges between components by some other number.
This weakening still leads to small set expansion behavior,
and can potentially lead to a wider range of parameters. 

\subsection{A discussion}

The dual systolicity condition translates 
the systolic notion from~\cite{Janus:2003,Janus:2006} to the language of graphs.
There are stronger systolic criteria for simplicial complexes than the one considered here 
(i.e., the $k$-large condition). 
We believe that further exploring these stronger criteria could lead to 
stronger guarantees and to powerful properties.
Our work is only a first step in this direction.

\section{Isoperimetry of weak pseudo-cubes}

In this section we prove bounds on the isoperimetric profile of weak pseudo-cubes.
    The proof we present here uses information theory, and is 
    inspired by Samorodnitsky's proof for the hypercube~\cite{Samorodnitsky2008AML}.
    The main difficulty we face is identifying the correct ``coordinate system'' to use.

\begin{proof}[Proof of Theorem~\ref{thm:weakIso}]
 Let $G = (V,E)$ be a $d$-dimensional weak pseudo-cube,
 and let $U$ be a non-empty set of vertices.    
    For $i \in [d]$, denote all inner $i$-edges of $U$ by
    \begin{equation*}
        e_i (U):=\{e \in E : e \subseteq U , \chi(e)=i\} .
    \end{equation*}
   Because
    \begin{equation*}
        d|U| = \partial(U) + 2\sum_{i \in [d]} e_i(U) ,
    \end{equation*}
we see that
    \begin{equation*}
        \phi(U) = d - \frac{2\sum_{i \in [d]} e_i(U)}{|U|} .
    \end{equation*}
It remains to prove that 
    \begin{equation}\label{comp1}
        \frac{2\sum_{i=1}^d e_i(U)}{|U|} \leq \log |U| .
    \end{equation}
    Let $X$ be a uniformly random vertex in $U$
    so that the Shannon entropy of $X$ is $H(X) = \log |U|$. 
    For every $I\subseteq [d]$, 
    denote by $X_I$ the set
    \begin{equation*}
        X_I = \{v\in V : \text{ $v$ is reachable from $X$ using colors in $I$}\} .
    \end{equation*}
   The crucial observation is that for all $e,i$ so that $\Pr[X_{\{i\}} = e] > 0$,
   we have
$$H(X|X_{\{i\}} = e) = 1_{e\in e_i(U)}.$$
In other words, when $e \in e_i(U)$, the entropy is one,
and when $e \not \in e_i(U)$, the entropy is zero.
Taking expectation, we see that
$$H(X|X_{\{i\}}) = \frac{2 e_i (U)}{|U|}.$$
Inequality~\eqref{comp1} can thus be written as
    \begin{equation*}
        \sum_{i \in [d]} H(X|X_{\{i\}}) \leq H(X) .
    \end{equation*}
To show this, we observe that for all $i > 1$,
weak color independence implies that
$$H(X|X_{\{i\}}) \leq H(X_{\{1,..,i-1\}}|X_{\{i\}}).$$
When $H(X|X_{\{i\}} = e)$ is one,
the distribution of $X_{\{1,..,i-1\}}$ conditioned on $X_{\{i\}} = e$ 
is uniform between two options
(so that the r.h.s.\ is one as well).
The set $X_I$ is determined by the set $X_{I'}$ whenever $I'\subseteq I$.
The data processing inequality thus implies
that for all $i>1$,
$$H(X_{\{1,\ldots,i-1\}}|X_{\{i\}}) \leq H(X_{\{1,\ldots,i-1\}}|X_{\{1,2,\ldots,i\}}).$$
By the chain rule,
 \begin{align*}
        \sum_{i \in [d]} H(X|X_{\{i\}}) &= H(X|X_{\{1\}})+\sum_{i>1} H(X|X_{\{i\}})\\
        &\leq H(X|X_{\{1\}}) +\sum_{i>1} H(X_{\{1,\ldots,i-1\}}|X_{\{1,2,\ldots,i\}})\\
        &= H(X,X_{\{1\}},X_{\{1,2\}},\ldots,X_{\{1,\ldots,d-1\}}|X_{\{1,\ldots,d\}})\\
        & \leq H(X) . \qedhere
    \end{align*}
\end{proof}
\section{Isoperimetry for weak dual systolic graphs}
\label{sec:dualSys}

This section is devoted for analyzing the isoperimetric profile
of weakly dual systolic graphs.

\subsection{A dynamic}

The key step in the proof is identifying a dynamic that
the isoperimetric profile satisfies.
This leads to the following definition. 

\begin{definition}
 A function $g:[1,\infty) \rightarrow [0,\infty)$ is a {\em dimension-independent bounding function} if for every weakly dual systolic graph $G=(V,E)$ of dimension $d$, and every $s > 0$,
    \begin{equation*}
P_G(s) \geq d-g(s) .
    \end{equation*}
\end{definition}

The following lemma describes the underlying dynamics.
For $\epsilon > 0$, define a functional $\mathcal{F}_\epsilon$ by
$$\big[\mathcal{F}_\epsilon(g)\big](s) = g(2^\frac{4}{\epsilon})+\epsilon \log s.$$

\begin{lemma}\label{bootstrap_mechanism}
Let $g$ be a dimension-independent bounding function 
that is monotonically non-decreasing so that $g(2) \geq 1$.
Then, for every $\epsilon > 0$, the function
$\mathcal{F}_\epsilon(g)$ 
is also a dimension-independent bounding function.
\end{lemma}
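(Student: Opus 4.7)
The plan is to prove the lemma by induction on the dimension $d$, setting $T := 2^{4/\epsilon}$. The base case $d=1$ (where $G$ is a matching) is immediate: for $s=1$ we have $P_G(1) = 1 \geq 1 - g(T)$, and for $s\geq 2$ the hypothesis $g(T)\geq g(2)\geq 1$ already gives $1 - g(T) - \epsilon\log s \leq 0 \leq P_G(s)$.

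For the inductive step, fix $G$ weakly dual systolic of dimension $d$ and $U\subseteq V(G)$ with $|U|=s$. If $s\leq T$, monotonicity of $g$ gives $\mathcal{F}_\epsilon(g)(s) \geq g(T) \geq g(s)$, so the desired bound follows directly from $g$ being a bounding function on $G$. Assume $s>T$. Decompose $U$ via the connected components $C_1,\ldots,C_k$ of $G_{-d}$ (each a weakly dual systolic graph of dimension $d-1$), setting $U_j := U\cap C_j$, $s_j := |U_j|$, $U' := \{j:s_j>0\}$, $k':=|U'|$. If $k'=1$ (so $U\subseteq C_{j_0}$), every color-$d$ edge at $U$ leaves $C_{j_0}$ and hence $U$, contributing $s$ to $\partial_G(U)$; the IH applied to $C_{j_0}$ yields $\phi_{C_{j_0}}(U)\geq (d-1)-\mathcal{F}_\epsilon(g)(s)$, whence $\phi_G(U)\geq d-\mathcal{F}_\epsilon(g)(s)$.

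The substantive case is $k'\geq 2$, with each $s_j<s$. I would apply the IH to each $C_j$, obtaining $\phi_{C_j}(U_j)\geq(d-1)-g(T)-\epsilon\log s_j$. Write $\vec p := (s_j/s)_{j\in U'}$, and let $I$ be the number of color-$d$ edges both of whose endpoints lie in $U$, so that the boundary color-$d$ edges total $s-2I$. Using the identity $\sum_j s_j\log s_j = s\log s - s\,H(\vec p)$, aggregation yields
\begin{equation*}
\phi_G(U) \;\geq\; d - g(T) - \epsilon\log s + \epsilon H(\vec p) - \tfrac{2I}{s}.
\end{equation*}
So the whole inductive step reduces to the key inequality $2I \leq \epsilon\cdot s\cdot H(\vec p)$.

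The hard part will be this inequality, and the crucial leverage is dual systolicity at color $d$: simplicity of the contracted graph forces at most one color-$d$ edge between any two components of $G_{-d}$, so the number of such edges from $U_j$ into $U\setminus U_j$ is at most $\min(s_j, k'-1)$ (at most $s_j$ trivially, at most $k'-1$ because these edges end in distinct components of $U'\setminus\{j\}$). Summing and then separating the components of $U'$ into ``small'' ($s_j \leq k'-1$) and ``large'' ($s_j \geq k'$) gives, after cancellation, $2I \leq s-\ell$ with $\ell := |\{j:s_j\geq k'\}|$. It remains to show $1-\ell/s \leq \epsilon H(\vec p)$, which I would prove by splitting on whether $\max_j p_j$ exceeds $\mu := (k'-1)/s$: if it does (so $\ell\geq 1$), a grouping-lemma lower bound on $H(\vec p)$ in terms of the binary entropy of $p_{\max}$ and $\log(k'-1)$, combined with $\log s > 4/\epsilon$, absorbs both the $1$ and the $-\ell/s$ correction; if not (so $\ell=0$), then $H(\vec p) \geq \log(1/\max_j p_j) \geq \log(s/(k'-1))$, and this combined with $s > T$ and the constraint $k'(k'-1) \geq s$ implicit in $\ell=0$ closes the argument.
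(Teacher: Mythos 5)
Your overall architecture is fine and close in spirit to the paper's: the paper also splits $U$ along the components of $G_{-d}$, handles $s\le 2^{4/\epsilon}$ by monotonicity of $g$ and $k'=1$ by passing to dimension $d-1$, and reduces everything to an ``internal color-$d$ edges versus entropy'' inequality (the paper inducts on $s$ as well and only needs the binary inequality $2e(U_i,U_{\neg i})\le \epsilon s\,h(p_i)$ for a single well-chosen index $i$, whereas you aggregate over all components and need $2I\le \epsilon s\,H(\vec p)$; your aggregation identity and the $k'=1$ and $s\le T$ cases are correct). The genuine gap is in the last step. Your relaxation of $2I\le\sum_j\min(s_j,k'-1)$ to $2I\le s-\ell$ discards exactly the information that systolicity provides, and the statement you then set out to prove, $1-\ell/s\le\epsilon H(\vec p)$, is simply false. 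Take $k'=2$ with $s_1=s_2=s/2$ and any $\epsilon<1$, say $\epsilon=1/2$ and $s=300>2^{4/\epsilon}=256$ (realizable, e.g.\ as half of each of two components of $G_{-d}$ in a clique product of suitable dimension): then $\ell=2$, $H(\vec p)=1$, and the claim reads $1-\tfrac{2}{s}\le\epsilon$, which fails. So no grouping-lemma or entropy lower bound can close your case ``$\max_j p_j>\mu$, $\ell\ge1$''; the plan as written cannot be completed. Note that in this configuration the true quantity is tiny ($2I\le 2$ by systolicity, comfortably below $\epsilon s H(\vec p)=\epsilon s$), so the loss is entirely in replacing $\ell(k'-1)$ by $\sum_{\text{large}}s_j-\ell$: when the large parts are much bigger than $k'$, the bound $s-\ell$ is of order $s$ while the actual edge count is of order $(k')^2$.

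The repair is to threshold the parts at mass $2^{-2/\epsilon}$ rather than at size $k'-1$ versus $k'$, which is exactly the paper's mechanism: for parts with $p_j\le 2^{-2/\epsilon}$ use the trivial bound $d_j\le s_j$ together with $\log(1/p_j)\ge 2/\epsilon$, so their contribution is absorbed by $\epsilon s\,p_j\log(1/p_j)$; for the remaining ``heavy'' parts observe that there are at most $2^{2/\epsilon}$ of them, so by simplicity of the contraction the edges among them number at most $2^{4/\epsilon}\le s$, and their total entropy contribution (each heavy part has $p_j>2^{-2/\epsilon}$ and, when there are at least two parts, $p_j<1$ bounded away from $1$ by $2^{-2/\epsilon}$) is enough to cover this after multiplying by $\epsilon s\ge\epsilon 2^{4/\epsilon}$. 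With that case split your aggregated inequality $2I\le\epsilon s H(\vec p)$ does go through, but as submitted the argument has a hole at its central step.
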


\begin{proof}
Let $G$ be a $d$-dimensional weakly dual systolic graph,
and let $U$ be a set of vertices of size $s>0$.
We need to prove that
\begin{equation}\label{bootstrap_obj}
\phi(U) = \phi_{G}(U) \geq d - g(2^\frac{4}{\epsilon})-\epsilon \log s.
\end{equation}
The proof is by induction on $d$ and $s$.
There are two induction bases.
If $s=1$, then $\phi(U)=d$ and the inequality holds.
If $d=1$ and $1<s$, then the r.h.s.\ of~\eqref{bootstrap_obj} is non-positive.
It remains to perform the inductive step.
Let $V_1,\ldots,V_L$ be the connected components of the graph $G_{-d}$
obtained from $G$ by deleting all edges of color $d$.
Decompose $U$ to $U_1,\ldots,U_L$ defined by $U_\ell = U \cap V_\ell$.
Assume that only the first $U_1,\ldots,U_k$ are non-empty
(and the rest are empty).
If $k=1$, then induction on $d$ completes the proof,
because if $G'$ is the graph $G$ induces on $V_1$
then $\phi(U) \geq \phi_{G'}(U) + 1$ and $G'$ has dimension $d-1$.
So, we can assume that $k>1$. 
For each $i \in [k]$, we can 
consider $U_i$ and $U_{\neg i} = \bigcup_{j \neq i} U_{j}$. 
For convenience, let $p_i = \tfrac{|U_i|}{s}$ and let $q_i = 1-p_i$.
Both $U_i$ and $U_{\neg i}$ are smaller than $U$, 
so that induction implies that
\begin{align*}
\frac{\partial(U_i)}{p_i s} & \geq d-g(2^\frac{4}{\epsilon})-\epsilon \log(p_i s) ,  \\
 \frac{\partial(U_{\neg i})}{q_i s} & \geq   d-g(2^\frac{4}{\epsilon})-\epsilon \log(q_i s) .
\end{align*}
How does $\partial(U)$ relate to these quantities? 
The edges between $U_i$ and $U_{\neg i}$ are counted both in $\partial(U_i)$ and in $\partial(U_{\neg i})$.
All these edge have color $d$.
The number of edges between $U_i$ and $U_{\neg i}$ is denoted by $e(U_i,U_{\neg i})$.
Therefore,
\begin{equation*}
    \partial(U) = \partial(U_i) + \partial(U_{\neg i}) - 2e(U_i,U_{\neg i}) .
\end{equation*}
Combining the three former equations gives us:
\begin{align*}
\frac{\partial(U)}{s}  
& \geq d-g(2^\frac{4}{\epsilon})-\epsilon p_i \log(p_i s)-\epsilon q_i \log(q_is) - \frac{2e(U_i,U_{\neg i})}{s} \\
& = d-g(2^\frac{4}{\epsilon})-\epsilon \log(s) + \epsilon h(p_i) - \frac{2e(U_i,U_{\neg i})}{s} ,
\end{align*}
where $h$ is the binary entropy function.
The missing part for us can thus be summarized by the inequality:
\begin{equation}\label{density_ineq}
   \frac{2e(U_i,U_{\neg i})}{s} \leq \epsilon h(p_i) .
\end{equation}
Notice that we did not assume anything about the index $i$, and so it is enough to 
find any index for which~\eqref{density_ineq} is satisfied.
The analysis is partitioned between three cases.
\begin{center}
\textbf{Case I}: There exists $i \in [k]$ so that $p_i \leq 2^{-\frac{2}{\epsilon}}$.
\end{center}
In this case, we do not even need to use dual systolicity. Instead, we can use the bound $e(U_i,U_{\neg i}) \leq p_i s$
that holds due to proper coloring. Plugging this in~\eqref{density_ineq}, we get that it is enough to show $2p_i \leq  \epsilon h(p_i)$.
So, it is enough to show that
$2p_i \leq \epsilon p_i \log \frac{1}{p_i}$.
This is immediately satisfied by the assumption of the case.

\begin{center}
\textbf{Case II}: $2^{\frac{4}{\epsilon}} \leq s $ and for all $i\in [k]$, we have $2^{-\frac{2}{\epsilon}} < p_i$.
\end{center}
Because $\sum_{i \in [k]} p_i = 1$, we know that
$k \leq 2^{\frac{2}{\epsilon}}$.
Systolicity implies that $e(U_i,U_{\neg i}) \leq k$ for all $i$.
This means that it suffices to prove for some $i$ that
\begin{equation*}
    \frac{2 \cdot 2^\frac{2}{\epsilon}}{ s} \leq \epsilon h(p_i) .
\end{equation*}
Because $k>1$, we can choose $i$ so that $p_i \leq \frac{1}{2}$. 
This means we can treat the binary entropy as an increasing function, and lower bound it by $h(2^{-\frac{2}{\epsilon}})$. 
It suffices to show that
\begin{equation*}
    \frac{2 \cdot 2^\frac{2}{\epsilon}}{\epsilon s} \leq \frac{2}{\epsilon} \cdot 2^{-\frac{2}{\epsilon}} ,
\end{equation*}
which indeed holds.

\begin{center}
\textbf{Case III}: $s < 2^{\frac{4}{\epsilon}}$.
\end{center}

Because $g$ is monotonic non-decreasing, we have $g(s) \leq g(2^\frac{4}{\epsilon})$. So,
\begin{equation*}
\phi(U) \geq d-g(s) \geq d - g(2^\frac{4}{\epsilon}) - \epsilon \log s. \qedhere
\end{equation*}
\end{proof}

\subsection{The fixed point}

Lemma~\ref{bootstrap_mechanism} provides a bootstrapping mechanism.
We can start with an initial bounding function $g_0$.
The pseudo-cube bound allows use to choose $g_0(x) = \log x$.
From $g_0$, 
we can get a family of new bounds $g_{1,\epsilon} = \mathcal{F}_\epsilon(g_0)$ parametrized by~$\epsilon$. 
For every set-size $s$, we can choose an $\epsilon$ that gives the best possible bound for that $s$. This leads to a new bounding function $g_1$.
We can plug $g_1$ into the same mechanism and get a stronger bound $g_2$,
and so forth.
The fixed-point of this dynamics is the best possible bound
this proof gives (which turns out to be optimal up to constant factors).
The following lemma describes this mechanism.

\begin{lemma}\label{log_roots}
    Let $\ell > 0$ be an integer. 
    Suppose $g(s) = c \log^{\frac{1}{\ell}} s$ is a dimension-independent bounding function
    where $c\geq1$. Then, the following function is also a dimension-independent bounding function:
    \begin{equation*}
        g'(m) = c' \log^{\frac{1}{\ell+1}} s ,
    \end{equation*} 
    where
    \begin{equation}\label{new_coeff}
        c' = \big(4c^\ell \ell \big)^\frac{1}{\ell+1} + \Big(4 \Big(\frac{c}{\ell}\Big)^\ell \Big)^{\frac{1}{\ell+1}} \geq 1.
    \end{equation}
\end{lemma}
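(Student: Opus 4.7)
The plan is to apply the bootstrapping mechanism of Lemma~\ref{bootstrap_mechanism} with the free parameter $\epsilon$ chosen optimally as a function of $s$. Because Lemma~\ref{bootstrap_mechanism} guarantees that $\mathcal{F}_\epsilon(g)$ is a dimension-independent bounding function for every fixed $\epsilon > 0$, and because the defining inequality $P_G(s) \geq d - h(s)$ is preserved under taking the pointwise infimum of bounding functions $h$, the function
\[
g'(s) \;:=\; \inf_{\epsilon > 0} \big[\mathcal{F}_\epsilon(g)\big](s) \;=\; \inf_{\epsilon > 0}\Big( c\,(4/\epsilon)^{1/\ell} + \epsilon\log s \Big)
\]
is itself a bounding function. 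The task then reduces to computing this infimum in closed form and verifying that it equals $c'\log^{1/(\ell+1)} s$ for the $c'$ of~\eqref{new_coeff}.

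First I would check the hypotheses of Lemma~\ref{bootstrap_mechanism}: the function $g(s) = c\log^{1/\ell} s$ is monotonically non-decreasing on $[1,\infty)$ and satisfies $g(2) = c \geq 1$, so the lemma applies to it. Substituting $g(2^{4/\epsilon}) = c(4/\epsilon)^{1/\ell}$ into the definition of $\mathcal{F}_\epsilon(g)$ yields the displayed expression above.

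The remaining step is a one-variable calculus problem. Differentiating
\[
f(\epsilon) \;=\; c \cdot 4^{1/\ell}\, \epsilon^{-1/\ell} + \epsilon \log s
\]
and setting the derivative to zero yields the optimal
\[
\epsilon^*(s) \;=\; 4^{1/(\ell+1)} \bigl(c/\ell\bigr)^{\ell/(\ell+1)} \log^{-\ell/(\ell+1)} s .
\]
Plugging $\epsilon^*$ back into the two summands, one verifies after collecting exponents that
\[
c\,(4/\epsilon^*)^{1/\ell} = \bigl(4 c^\ell \ell\bigr)^{1/(\ell+1)} \log^{1/(\ell+1)} s, \qquad \epsilon^* \log s = \bigl(4 (c/\ell)^\ell\bigr)^{1/(\ell+1)} \log^{1/(\ell+1)} s .
\]
Summing these two contributions produces exactly $c' \log^{1/(\ell+1)} s$ with the constant $c'$ of~\eqref{new_coeff}, and the bound $c' \geq 1$ follows immediately from $(4c^\ell \ell)^{1/(\ell+1)} \geq 4^{1/(\ell+1)} > 1$.

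Conceptually, the only non-routine part is recognizing that since $\mathcal{F}_\epsilon(g)$ is a valid bounding function for every fixed $\epsilon$, we are free to choose $\epsilon$ adaptively in $s$; this is precisely the self-improving step that turns the rate $\log^{1/\ell}$ into $\log^{1/(\ell+1)}$. The constants in~\eqref{new_coeff} are then simply the output of this Lagrange-type optimization, and the main care required is tracking the two exponents $1/(\ell+1)$ and $\ell/(\ell+1)$ correctly so that the two summands combine into the advertised expression.
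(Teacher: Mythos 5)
Your proposal is correct and follows essentially the same route as the paper: substitute $g$ into $\mathcal{F}_\epsilon$, optimize over $\epsilon$ to get the same $\epsilon^*(s)$, and collect the two summands into $c'\log^{1/(\ell+1)} s$. Your explicit check of the hypotheses of Lemma~\ref{bootstrap_mechanism} and the simpler bound $c' \geq (4c^\ell\ell)^{1/(\ell+1)} > 1$ are minor (welcome) refinements, not a different argument.
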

\begin{proof}
Plugging $g$ into $\mathcal{F}_\epsilon$ gives
    for every $\epsilon>0$, a new function
    \begin{align*}
        f_\epsilon(s) &= c \log^{\frac{1}{\ell}}(2^\frac{4}{\epsilon}) + \epsilon \log s\\
        &= c \Big(\frac{4}{\epsilon}\Big)^{\frac{1}{\ell}} + \epsilon \log s .
    \end{align*}
    For every $s$, we wish to find the best possible $\epsilon = \epsilon(s)$.
    An elementary calculation leads to the following choice
    \begin{align*}
        &\epsilon(s) = \Big(\frac{c 4^{\frac{1}{\ell}}}{\ell \log s}\Big)^{\frac{\ell}{\ell+1}} .
    \end{align*}
Plugging this $\epsilon$ gives
\begin{align*}
f'(s)
&  = c 4^{\frac{1}{\ell}} \Big(\frac{\ell \log s}{c 4^{\frac{1}{\ell}}}\Big)^{\frac{1}{\ell+1}}  + \Big(\frac{c 4^{\frac{1}{\ell}}}{\ell \log s}\Big)^{\frac{\ell}{\ell+1}} \log s\\
&  = \Big(c 4^{\frac{1}{\ell}} \Big(\frac{\ell}{c 4^{\frac{1}{\ell}}}\Big)^{\frac{1}{\ell+1}}  + \Big(\frac{c 4^{\frac{1}{\ell}}}{\ell}\Big)^{\frac{\ell}{\ell+1}} \Big) \log^{\frac{1}{\ell+1}} s .
\end{align*}
It remains to verify that $c' \geq 1$.
This follows by a direct substitution of $c=1$ 
that gives $(4\ell)^\frac{1}{\ell+1} + (\frac{4}{\ell^\ell})^\frac{1}{\ell+1} > 1$. 
\end{proof}

\begin{proof}[Proof of Theorem \ref{theta_loglog_weak}]
We build a sequence of dimension-independent bounding functions
$g_0,g_1,\ldots$ using Lemma~\ref{log_roots}.
    Theorem~\ref{thm:weakIso} allows to choose $g_0(s) = \log s$,
    which we can plug into Lemma~\ref{log_roots}.
    The lemma leads to a sequence of constants $c_0 =1$,
    and $c_{\ell+1}$ is defined from $c_\ell$ via~\eqref{new_coeff}.
A simple induction on $\ell$ shows that $c_\ell \leq 4\ell$. 
The induction base is true, and the induction step is
\begin{align*}
\big(4 (4\ell)^\ell \ell \big)^\frac{1}{\ell+1} + \Big(4 \Big(\frac{4\ell}{\ell}\Big)^\ell \Big)^{\frac{1}{\ell+1}} \leq 4 \ell + 4 .
\end{align*}
%
%
We thus have the following family of dimension-independent bounding functions:
\begin{equation}\label{l_dep_bound}
    g_\ell(s) = 4 \ell \log^{\frac{1}{\ell}} s .
\end{equation}
For each $s>1$, we can use $\ell$ that suits us best. 
Plugging $\ell = \lceil \log \log s \rceil$ gives
%
\begin{align*}
    g_\ell(s) 
    &= 4 \lceil \log \log s \rceil \log^{\frac{1}{\lceil \log \log s  \rceil}} s \\
    & \leq 4 ( 1 + \log \log s )  \log^{\frac{1}{ \log \log s}} s \\
    & = 8 ( 1 + \log \log s )  . \qedhere
\end{align*}
%
\end{proof}

\section{Threshold rank of clique products}
\begin{proof}[Proof of Theorem~\ref{thm:trhRk}]
Let $\epsilon = \frac{2k}{d}$ with
a positive integer $k \leq \tfrac{d}{2}$.
Let $M$ denote the normazlied adjacency matrix of $CP^{(d)}$.
Let $\lambda_1 \geq \lambda _2 \geq \ldots \geq \lambda_T$ be the eigenvalues of $M$ which are larger than $1-\epsilon$.
Let $u_1,\ldots,u_T$ be the corresponding normalized eigenvectors
(which are orthogonal). 
Denote by $U$ the span of $u_1,\ldots,u_T$.
The graph $CP^{(d)}$ is composed of multiple copies of $CP^{(d-k)}$. 
The number of copies is $\tfrac{n^{(d)}}{n^{(d-k)}}$.
For each copy $j$, let $\tilde v_j$ be the indictor vector
of the vertices in copy $j$,
and let $v_j = \frac{\tilde v_j}{\|\tilde v_j\|}$.
We claim two things:
\begin{enumerate}
\item For every $t$, 
$$\sum_{j} (\langle u_t, v_j \rangle)^2 \leq 1.$$
\item For every $j$, 
$$\sum_{t} (\langle u_t, v_j \rangle)^2 \geq \tfrac{1}{2}.$$
\end{enumerate}
The first item holds because the vectors $v_1,v_2,\ldots$
are orthonormal.     
    To prove the second item, fix $j$, and
    denote by $w_1$ the projection of $v_j$ to $U$
    and by $w_2$ the projection of $v_j$ to $U^\perp$.
   So, $\|w_1\|^2 = \sum_{t} (\langle u_t, v_j \rangle)^2$
   and $\|w_1\|^2 + \|w_2\|^2 = 1$.
The main point is that
\begin{align*}
\langle v_j , M v_j \rangle = 1-\tfrac{k}{d} .
       \end{align*}
The last calculation is
       \begin{align*}
        \langle v_j , M v_j \rangle 
        & = \langle w_1 , M w_1 \rangle + 2 \langle w_1 , M w_2 \rangle 
        + \langle w_2 , M w_2 \rangle \\
         & = \langle w_1 , M w_1 \rangle + \langle w_2 , M w_2 \rangle \\
         & \leq \|w_1\|^2 + \big(1- \tfrac{2k}{d} \big) (1-\| w_1 \|^2) \\
         & = 1- \tfrac{2k}{d} + \tfrac{2k}{d} \|w_1\|^2 ,
        \end{align*}
        so that
$\|w_1\|^2 \geq \frac{k}{d} \cdot \frac{d}{2k} = \frac{1}{2}$, as needed.

%

Now, taking the sum
    \begin{equation*}
\frac{n^{(d)}}{2n^{(d-k)}} \leq \sum_{j,t} (\langle u_t, v_j \rangle)^2 \leq T.
    \end{equation*}
Recalling the recurrence relation in Equation~\ref{simple_rec}, and applying it $k$ times, we have
$$\big(n^{(d-k)}\big)^{2^{k}} < n^{(d)}$$
so that
\begin{equation*}
T \geq \frac{\big( n^{(d)}\big)^{1-2^{-k}}}{2}. \qedhere
\end{equation*}
\end{proof}

\section*{Acknowledgement}

We wish to thank Irit Dinur, Nir Lazarovich, and Roy Meshulam
for helpful and insightful discussions. 


\bibliographystyle{abbrv}

\begin{thebibliography}{10}

\bibitem{5671307}
S.~Arora, B.~Barak, and D.~Steurer.
\newblock Subexponential algorithms for unique games and related problems.
\newblock In {\em FOCS}, pages 563--572, 2010.

\bibitem{BarakGHMRS15}
B.~Barak, P.~Gopalan, J.~H{\aa}stad, R.~Meka, P.~Raghavendra, and D.~Steurer.
\newblock Making the long code shorter.
\newblock {\em {SIAM} J. Comput.}, 44(5):1287--1324, 2015.

\bibitem{brukhim:2022}
N.~Brukhim, D.~Carmon, I.~Dinur, S.~Moran, and A.~Yehudayoff.
\newblock A characterization of multiclass learnability.
\newblock In {\em FOCS}, pages 943--955, 2022.

\bibitem{davis2002nonpositive}
M.~W. Davis.
\newblock Nonpositive curvature and reflection groups.
\newblock {\em Handbook of geometric topology}, pages 373--422, 2002.

\bibitem{grassman}
I.~Dinur, S.~Khot, G.~Kindler, D.~Minzer, and M.~Safra.
\newblock On non-optimally expanding sets in grassmann graphs.
\newblock {\em Israel Journal of Mathematics}, 243(1):377--420, 2021.

\bibitem{Filmus:2018}
Y.~Filmus, R.~O'Donnell, and X.~Wu.
\newblock Log-sobolev inequality for the multislice, with applications.
\newblock {\em Electron. J. Probab.}, 27:1--30, 2022.

\bibitem{gromov1992systoles}
M.~Gromov.
\newblock {\em Systoles and intersystolic inequalities}.
\newblock Institut des Hautes etudes Scientifiques, 1992.

\bibitem{herlihy1999topological}
M.~Herlihy and N.~Shavit.
\newblock The topological structure of asynchronous computability.
\newblock {\em J. ACM}, 46(6):858--923, 1999.

\bibitem{Hoory2006ExpanderGA}
S.~Hoory and N.~Linial.
\newblock Expander graphs and their applications.
\newblock {\em Bulletin of the American Mathematical Society}, 43:439--561,
  2006.

\bibitem{Janus:2003}
T.~Januszkiewicz and J.~Swi{\k a}tkowski.
\newblock Hyperbolic {C}oxeter groups of large dimension.
\newblock {\em Commentarii Mathematici Helvetici}, 78(3):555--583, 2003.

\bibitem{Janus:2006}
T.~Januszkiewicz and J.~Swi{\k a}tkowski.
\newblock Simplicial nonpositive curvature.
\newblock {\em Publications Math{\'e}matiques de l'Institut des Hautes
  {\'E}tudes Scientifiques}, 104(1):1--85, 2006.

\bibitem{Kahn:1988}
J.~Kahn, G.~Kalai, and N.~Linial.
\newblock The influence of variables on boolean functions.
\newblock In {\em [Proceedings 1988] 29th Annual Symposium on Foundations of
  Computer Science}, pages 68--80, 1988.

\bibitem{katz2007systolic}
M.~G. Katz and J.~P. Solomon.
\newblock {\em Systolic geometry and topology}, volume 137.
\newblock American Mathematical Society Providence, RI, 2007.

\bibitem{khot2018small}
S.~Khot, D.~Minzer, D.~Moshkovitz, and M.~Safra.
\newblock Small set expansion in the {J}ohnson graph.
\newblock {\em ECCC}, TR18-078, 2018.

\bibitem{Subhash:2015}
S.~Khot and N.~K. Vishnoi.
\newblock The unique games conjecture, integrality gap for cut problems and
  embeddability of negative-type metrics into l1.
\newblock {\em J. ACM}, 62(1):1--39, 2015.

\bibitem{Kolla10}
A.~Kolla.
\newblock Spectral algorithms for unique games.
\newblock In {\em CCC}, pages 122--130, 2010.

\bibitem{Mossel2005NoiseSO}
E.~Mossel, R.~O'Donnell, and K.~Oleszkiewicz.
\newblock Noise stability of functions with low influences: Invariance and
  optimality.
\newblock In {\em FOCS}, pages 21--30, 2005.

\bibitem{moussong1988hyperbolic}
G.~Moussong.
\newblock {\em Hyperbolic coxeter groups}.
\newblock The Ohio State University, 1988.

\bibitem{Stuerer:2010}
P.~Raghavendra and D.~Steurer.
\newblock Graph expansion and the unique games conjecture.
\newblock In {\em STOC}, pages 755--764, 2010.

\bibitem{Samorodnitsky2008AML}
A.~Samorodnitsky.
\newblock A modified logarithmic sobolev inequality for the hamming cube and
  some applications.
\newblock {\em arXiv:0807.1679}, 2008.

\bibitem{wigderson2002entropy}
A.~Wigderson, O.~Reingold, and S.~P. Vadhan.
\newblock Entropy waves, the zig-zag graph product, and new constant-degree
  expanders and extractors.
\newblock {\em Annals of Mathematics}, 155(1):157--187, 2002.

\bibitem{Goubault_2018}
Éric Goubault, J.~Ledent, and S.~Rajsbaum.
\newblock A simplicial complex model for dynamic epistemic logic to study
  distributed task computability.
\newblock {\em Electronic Proceedings in Theoretical Computer Science},
  277:73--87, sep 2018.

\end{thebibliography}

\end{document}